\documentclass[reqno,b5paper]{amsart}

\usepackage{amsmath}
\usepackage{amssymb}
\usepackage{amsthm}
\usepackage{enumerate}
\usepackage[mathscr]{eucal}
\usepackage{eqlist}

\setlength{\textwidth}{121.9mm}
\setlength{\textheight}{176.2mm}

%%%%%%%%%%%%%%%%%%%%%%%%%%%%%%%%%%%%%%%%%%%%%%%%%%%%%%%%%%%%%%%%%%%%%%%%
%Options of setting environments for theorems, definitions, etc.
%and their counters.

\theoremstyle{plain}
\newtheorem{theorem}{Theorem}[section]
%[section] at the end
%subordinates the counter "theorem" to the counter "section"

%[theorem] after setting the counter "prop":
%the counter "prop" joins counter "theorem"
\newtheorem{lemma}{Lemma}[section]

%the counter "corol" is subordinate to the counter "theorem"

\theoremstyle{definition}
\newtheorem{definition}{Definition}[section]
\newtheorem{remark}{\textnormal{\textbf{Remark}}}
%the counter "remark" does not join any counter and
%is not subordinate to any counter

%* the environment "acknowledgement" is not numbered

\theoremstyle{remark}

%\newtheorem{name of the new (been setting) counter}{text which
%will appear in the output}[name of the existing counter to which
%the new counter will be subordinate]

%\newtheorem{name of the new (been setting) counter}[name of the existing counter which
%the new counter joins]{text which will appear in the output}

\numberwithin{equation}{section}
%subordinates the counter "equation" to the counter "section"

%%%%%%%%%%%%%%%%%%%%%%%%%%%%%%%%%%%%%%%%%%%%%%%%%%%%%%%%%%%%%%%%%%%%%

%%%%%%%%%%%%%%%%%%%%%%%%%%%%%%%%%%%%%%%%%%%%%%%%%%%%%%%%%%%%%%%%%%%%%%%

\begin{document}

\title[2. The controllability structure]%
{On the internal approach to differential equations\\ 2. The controllability structure}

\author[Veronika Chrastinov\'a \and V\'aclav Tryhuk]%
{Veronika Chrastinov\'a* \and V\'aclav Tryhuk**}

\newcommand{\acr}{\newline\indent}

\address{\llap{*\,}Brno University of Technology\acr
Faculty of Civil Engineering\acr
Department of Mathematics\acr
Veve\v{r}\'{\i} 331/95, 602 00 Brno\acr
Czech Republic}
\email{chrastinova.v@fce.vutbr.cz}

\address{\llap{**\,}Brno University of Technology\acr
Faculty of Civil Engineering\acr
AdMaS Center\acr
Veve\v{r}\'{\i} 331/95, 602 00 Brno\acr
Czech Republic}
\email{tryhuk.v@fce.vutbr.cz}

\thanks{This paper was elaborated with the financial support of the European
Union's "Operational Programme Research and Development for
Innovations", No. CZ.1.05/2.1.00/03.0097, as an activity of the
regional Centre AdMaS "Advanced Materials, Structures and
Technologies".}

\subjclass[2010]{58A17, 58J99, 35A30}

\keywords{diffiety, controllability, Cauchy characteristics}

\begin{abstract}
The article concerns the geometrical theory of general systems $\Omega$ of partial differential equations in the \emph{absolute sense}, i.e., without any additional structure and subject to arbitrary change of variables in the widest possible meaning. The main result describes the composition series $\Omega^0\subset\Omega^1\subset\cdots\subset\Omega$ where $\Omega^k$ is the maximal system of differential equations "induced" by $\Omega$ such that the solution of $\Omega^k$ depends on arbitrary functions of $k$ independent variables (on constants if $k=0$). This is a~well--known result for the particular case of underdetermined systems of ordinary differential equations. Then $\Omega=\Omega^1$ and we have the composition series $\Omega^0\subset\Omega^1=\Omega$ where $\Omega^0$ involves all first integrals of $\Omega,$ therefore $\Omega^0$ is trivial (absent) in the controllable case. The general composition series $\Omega^0\subset\Omega^1\subset\cdots\subset\Omega$ may be regarded as a~"multidimensional" controllability structure for the partial differential equations.

Though the result is conceptually clear, it cannot be included into the common jet theory framework of differential equations. Quite other and genuinely coordinate--free approach is introduced.
\end{abstract}

\maketitle

\section{Preface}\label{sec1}
The origin of differential geometry rests on the investigation of surfaces firmly localized in the Euclidean space, this is the~\emph{external theory} by Euler. Subsequently the~\emph{internal theory} due to Riemann was investigated where the ambient Euclidean space disappears. The geometrical theory of differential equations is actually subject to analogous reconstruction.

Turning to more detail, let us recall the space $\mathbf M(m,n)$ supplied with (local) jet coordinates
\begin{equation}\label{t1.1}x_i,\ w^j_I\qquad (i=1,\ldots,n;\,j=1,\ldots,m;\,I=i_1\cdots i_r;\, r=0,1,\ldots\,)\end{equation}
where $x_i$ are independent variables, $w^j$ (empty $I=\emptyset)$ are dependent variables and $w^j_I$ stand for the derivatives
\[\frac{\partial^{i_1+\cdots+i_r}w^j}{\partial x_{i_1}\cdots\partial x_{i_r}}\qquad (i_1,\ldots,i_r=1,\ldots,n;\, j=1,\ldots,m;\, r=0,1,\ldots\,).\]
Roughly saying, $\mathbf M(m,n)$ is (locally) the infinite--order jet space of smooth cross-sections of the fibered space $\mathbb R^{m+n}\rightarrow\mathbb R^n.$ Differential equations are traditionally regarded as a~subspace $\mathbf M\subset\mathbf M(m,n)$ locally described by certain conditions
\begin{equation}\label{t1.2}D_{i_1}\cdots D_{i_r}f^k=0\qquad (i_1,\ldots,i_r=1,\ldots,n;\, r=0,1,\ldots;\,\, k=1,\ldots,K)\end{equation}
where $f^1,\ldots,f^K$ are given functions of a~finite number of coordinates (\ref{t1.1}) and
\begin{equation}\label{t1.3}D_i=\frac{\partial}{\partial x_i}+\sum w^j_{Ii}\frac{\partial}{\partial w^j_I}\qquad (i=1,\ldots,n)
\end{equation}
the total derivatives. Briefly saying, we deal with the infinite prolongation of the system $f^1=\cdots=f^K=0.$ This is just the~\emph{external approach}, differential equations are firmly localized in the jet space.

The \emph{internal approach} should not be affected by the inclusion $\mathbf M\subset\mathbf M(m,n).$ Within the common jet theory, this goal can be achieved after lengthy procedure \cite{T1,T2} which is moreover highly obscured by the unpleasant fact that the infinite--order jet spaces were not yet characterized in coordinate--free terms and the totality of all automorphisms is unknown. However differential equations without any additional structure affected by the ambient space $\mathbf M(m,n)$ and considered only on $\mathbf M$ can be precisely described on a~few lines by using some abstract algebraical properties of the module $\Omega$ of contact forms
\begin{equation}\label{t1.4}
\omega^j_I=\mbox{d}w^j_I-\sum w^j_{Ii}\mbox{d}x_i \quad (j=1,\ldots,m;\,I=i_1\cdots i_r;\, r=0,1,\ldots\,)\end{equation}
restricted to $\mathbf M.$ We speak of \emph{diffiety} $\Omega.$ Expressively saying, diffieties represent the system of differential equations in the \emph{absolute sense}, i.e., without any preferred choice of dependent and independent variables. There is only one disadvantage of this alternative approach \cite{T3,T4}. Though it is quite simple and brief, already the primary concepts and the more the final achievements cannot be translated into the common language of jets in full generality.

Our aim is to discuss a~far--going generalization of the classical controllability of the underdetermined systems of \emph{ordinary} differential equations $\Omega=\Omega^1$, i.e., the existence of the first integrals $\Omega^0\subset\Omega$. \emph{Partial} differential equations $\Omega$ behave much more involved: we obtain even a~large sequence $\Omega^0\subset\Omega^1\subset\cdots\subset\Omega$ of simplifying equations $\Omega^k$ (more precisely: of diffieties $\Omega^k$) which are induced by $\Omega.$
The upper indice $k$ declares that the (formal) solution of diffiety $\Omega^k$ depends on the choice of a~certain number $\mu(\Omega^k)\geq 1$ of arbitrary functions of $k$ independent variables (on constants if $k=0$).

This fundamental achievement can be informally illustrated as follows.\\
\begin{picture}(130,130)(-10,0)
\qbezier(10,110)(80,116)(100,120)\qbezier(20,80)(80,86)(110,90)\qbezier(10,110)(15,110)(20,80)
	\qbezier(20,100)(20,95)(110,100)
	\qbezier(20,95)(50,105)(110,115)
	\qbezier(30,90)(50,100)(110,110)
\qbezier(10,85)(10,85)(20,86)\qbezier(20,55)(80,61)(110,65)\qbezier(10,85)(15,85)(20,55)
	\qbezier(20,75)(90,65)(110,70)
	\qbezier(25,65)(90,80)(110,80)
\put(110,90){$\left.\begin{array}{l}\\\\\\\\\\\end{array}\right\}$}\put(130,90){$\Omega^1$}
\qbezier(10,20)(80,32)(100,45)
\qbezier(20,10)(80,20)(100,30)
\put(130,25){$\Omega^0$}
\put(110,25){$\left.\begin{array}{l}\\\\\\\end{array}\right\}$}
\put(0,45){leaves}\put(85,50){solutions}
\multiput(40,25)(0,5){12}{\put(0,0){\line(0,1){3}}}\put(40,50){\vector(0,-1){3}}
\multiput(50,15)(0,5){9}{\put(0,0){\line(0,1){3}}}\put(50,40){\vector(0,-1){3}}
\put(50,-10){Figure 1a.}
\end{picture}\hspace{2cm}
\begin{picture}(130,130)(0,0)
\put(110,90){$\left.\begin{array}{l}\\\\\\\\\\\end{array}\right\}$}\put(130,90){$\Omega$}
\put(110,25){$\left.\begin{array}{l}\\\\\\\end{array}\right\}$}\put(130,25){$\Omega^k$}
%\put(0,100){\line(2,1){35}}\put(0,65){\line(2,1){10}}\put(0,65){\line(0,1){35}}
\put(10,75){\line(0,1){7}}
\qbezier(10,75)(50,80)(100,78)
                                      \qbezier(10,82)(50,87)(97,85)
                                      \qbezier(100,78)(100,78)(97,85)
\put(30,90){\line(0,1){7}}\qbezier(30,90)(70,95)(120,93)
                                      \qbezier(30,97)(70,102)(117,100)
                                      \qbezier(120,93)(120,93)(117,100)
\put(30,100){\line(0,1){7}}\qbezier(30,100)(70,105)(116,103)
                                      \qbezier(30,107)(70,112)(113,110)
                                      \qbezier(116,103)(116,103)(113,110)
%\put(0,15){\line(2,1){35}}\qbezier(0,12)(16,24)(34,32)
					\qbezier(30,30)(70,35)(108,30)
					\qbezier(30,30)(70,30)(113,20)
					\qbezier(108,30)(108,30)(113,20)
					%%%%%%%
					\qbezier(10,18)(80,18)(108,15)
					\qbezier(10,18)(70,12)(113,6)
					\qbezier(108,15)(108,15)(113,6)
					%%%%%%%
\multiput(10,18)(0,5){13}{\put(0,0){\line(0,1){3}}}\put(10,50){\vector(0,-1){3}}
%\multiput(20,20)(0,5){13}{\put(0,0){\line(0,1){3}}}\put(20,55){\vector(0,-1){3}}
%%%
\multiput(30,30)(0,5){16}{\put(0,0){\line(0,1){4}}}\put(30,60){\vector(0,-1){3}}
\multiput(60,30)(0,5){14}{\put(0,0){\line(0,1){4}}}\put(60,60){\vector(0,-1){3}}
\multiput(80,27)(0,5){17}{\put(0,0){\line(0,1){4}}}\put(80,65){\vector(0,-1){3}}
\put(50,-10){Figure 1b.}
\put(85,50){solutions}
\end{picture}\\
\begin{itemize}
\item[-]{The left--hand figure describes the classical case of \emph{underdetermined systems of ordinary differential equations} $\Omega^1.$ In the non--controllable case, the original space is fibered by leaves which consist of solutions and the projection into the natural factorspace identifies all solutions lying in a~leaf which provide a~\emph{determined system} $\Omega^0.$ }
\item[-]{The right--hand figure describes the generalization: The original initial data for the solutions of $\Omega$ projected on lower--dimensional data of $\Omega^k$ and therefore some solutions are identified after the projections. This is however a~very rough description of the result.}
\end{itemize}
On this occasion, let us mention quite other perspectives of the theory. As yet we discussed the controllability on the total space $\mathbf M,$ it is however easy to introduce the controllability on a~subspace of $\mathbf M,$ in particular \emph{along a~fixed solution of} $\Omega.$
In the particular case of one independent variable, we obtain the classical Mayer extremals of the calculus of variations \cite[Figure 2]{T5}.
It should be expected that analogous "multidimensional Mayer extremals" appear for the case of several independent variables and the classical calculus of variations will be reduced to the controllability concepts.
%%%%%%%%%%%%%%%%%%%%%%%%%%%%%%%%%%%%%%%%%%%%%%%%
\section{Introduction}\label{sec2}
The present article continues \cite{T4,T5,T6} but it is in principle made selfcontained. No advanced technical tools are needed, we deal with vector fields and differential forms on $\mathbb C^\infty$--smooth manifolds together with elementary algebra. Though the main concepts are of the global nature, the article is devoted to the~\emph{local theory}. The definition domains are not specified, e.g., our notational convention for a~mapping $\mathbf m: \mathbf M\rightarrow\mathbf N$ between manifolds allows the definition domain of $\mathbf m$ to be an~open subset of $\mathbf M.$ We also tacitly postulate the existence of bases in various modules to appear.

Besides the occasional use of the finite--dimensional manifolds, we mainly deal with the \emph{smooth manifolds $\mathbf M$ modelled on} $\mathbb R^\infty$ \cite{T4,T5,T6}.
They are supplied with (local) coordinates $h^i:\mathbf M\rightarrow \mathbb R$ $(i=1,2,\ldots\,)$ together with the \emph{structural ring} $\mathcal F=\mathcal F(\mathbf M)$ (the abbreviation, if possible) of functions $f:\mathbf M\rightarrow\mathbb R$ locally expressible by $C^\infty$--smooth formula $f=F(h^1,\ldots,h^{m(f)}).$
Then the $\mathcal F$--module $\Phi=\Phi(\mathbf M)$ of differential forms $\varphi=\sum f^ i\mbox{d}g^ i$ (finite sum with $f^ i, g^ i\in\mathcal F$) and the dual $\mathcal F$--module $\mathcal T=\mathcal T(\mathbf M)$ of vector fields $Z$ immediately appear. We recall that vector fields are regarded as $\mathcal F$--linear functionals $Z: \Phi\rightarrow\mathcal F$ where we denote
\[\varphi(Z)=Z\rfloor\varphi\in\mathcal F,\ \mbox{d}f(Z)=Z\rfloor\mbox{d}f=Zf\in\mathcal F\qquad (\varphi\in\Phi,\,Z\in\mathcal T,\,f\in\mathcal F).\]
The exterior differential $\,\mbox{d}\,$ and the Lie derivative $\mathcal L_Z$  satisfying the rules
\[\mathcal L_Z\varphi=Z\rfloor\mbox{d}\varphi+\mbox{d}\varphi(Z),\ \mathcal L_ZY=[Z,Y]\qquad (Y,Z\in\mathcal T;\,\varphi\in\Phi)\]
does not need any comments.

We shall deal with various $\mathcal F$--submodules $\Omega\subset\Phi.$ Then $\mathcal H=\mathcal H(\Omega)\subset\mathcal T$ denotes the submodule of all vector fields $Z$ such that $\Omega(Z)=0.$ A~submodule $\Omega\subset\Phi$ is called \emph{flat} (or: \emph{satisfying the Frobenius condition}) if any of the (equivalent) requirements
\[\mbox{d}\Omega\equiv 0\ (\mbox{mod }\Omega),\ \mathcal L_\mathcal H\Omega\subset\Omega,\ [\mathcal H,\mathcal H]\subset\mathcal H\quad (\mathcal H=\mathcal H(\Omega))\]
is satisfied.

\medskip

Our crucial concept appears if the classical flatness is completed with appropriate finiteness requirements as follows.
\begin{definition}\label{def2.1}
A~finite--codimensional submodule $\Omega\subset\Phi$ is called \emph{diffiety} if there exists a~filtration $\Omega_*: \Omega_0\subset\Omega_1\subset\cdots\subset\Omega=\cup\Omega_l$ with the finite--dimensional submodules $\Omega_l\subset\Omega$ $(l=0,1,\ldots\,)$ such that
\begin{equation}\label{t2.1}
\mathcal L_\mathcal H\Omega_l\subset\Omega_{l+1}\ (\mbox{all }l),\ \Omega_l+\mathcal L_\mathcal H\Omega_l=\Omega_{l+1}\ (l\mbox{ large enough}),
\end{equation}
the so called \emph{good filtration}.
\end{definition}

To the filtration $\Omega_*$ we introduce the graded \mbox{$\mathcal F$--module}
\begin{equation}\label{t2.2}
\mbox{Grad}\,\Omega_*=\mathcal M=\mathcal M_0\oplus\mathcal M_1\oplus\cdots\qquad (\mathcal M_l=\Omega_l/\Omega_{l-1},\, \Omega_{-1}=0).
\end{equation}
There are $\mathcal F$--linear mappings $Z:\mathcal M\rightarrow\mathcal M$ ($Z\in\mathcal H$) where
\[Z[\omega]=[\mathcal L_Z\omega]\in\mathcal M_{l+1}\qquad (\omega\in\Omega_l,\,[\omega]\in\mathcal M_l)\]
and the square brackets denote the factorization. However
\[\mathcal L_X\mathcal L_Y-\mathcal L_Y\mathcal L_X=\mathcal L_{[X,Y]}\qquad (X,Y,[X,Y]\in\mathcal H)\]
and we infer that $\mathcal M$ becomes a~graded $\mathcal A$--module where
\[\mathcal A=\mathcal A_0\oplus\mathcal A_1\oplus\cdots\qquad (\mathcal A_0=\mathcal F,\,\mathcal A_1=\mathcal H,\,\mathcal A_2=\mathcal H\odot\mathcal H,\,\ldots\,)\]
is the graded polynomial algebra over $\mathcal H$ \cite{T4,T6}. The multiplication is defined by
\[Z_1\cdots Z_r[\omega]=[\mathcal L_{Z_1}\cdots\mathcal L_{Z_r}\omega]\in\mathcal M_{l+r}\qquad (Z_1,\ldots,Z_r\in\mathcal H;\,\omega\in\Omega_l;\,[\omega]\in\mathcal M_l).\]
Owing to (\ref{t2.1}), $\mathcal M$ becomes a~Noetherian $\mathcal A$--module and the classical commutative algebra can be applied. In particular
\begin{equation}\label{t2.3}
\dim\mathcal M_l=e_\nu\binom l\nu+\cdots + e_0\binom l0\qquad (\,l \text{ large enough})  \end{equation}
is the \emph{Hilbert polynomial} with integer coefficients. Assuming $e_\nu\neq 0,$ the values $\nu=\nu(\Omega),$ $\mu=\mu(\Omega)=e_\nu$ do not depend on the choice of the filtration \cite{T4} and in accordance with the theory of exterior differential systems \cite{T7,T8} we declare that the~\emph{solution of $\Omega$ depends on $\mu(\Omega)$ functions of $\nu(\Omega)+1$ variables}. Trivially is $\nu(\Omega)+1\leq \dim\mathcal H=\dim \Phi/\Omega.$

\begin{definition}\label{def2.2}
Let $\Omega\subset\Phi$ be a~diffiety and $n=n(\Omega)=\dim\Phi/\Omega=\dim\mathcal H.$ A~certain functions $x_1,\ldots,x_n\in\mathcal F$ are \emph{called independent variables} for $\Omega$ if differentials $\text{d}x_1,\ldots,\text{d}x_n$ are independent modulo $\Omega.$ The vector fields $D_1,\ldots,D_n\in\mathcal H$ defined by the properties
\[D_ix_i=1,\, D_ix_j=\Omega(D_i)=0\qquad (i,j=1,\ldots,n;\, i\neq j)\]
are \emph{called total derivatives} with respect to the independent variables $x_1,\ldots,x_n.$
\end{definition}
It follows that $D_1,\ldots,D_n$ is a~basis of $\mathcal H$ and every form $\varphi\in\Phi$ admits unique decomposition
\[\varphi=\varphi(D_1)\text{d}x_1+\cdots +\varphi(D_n)\text{d}x_n+\omega\qquad (\omega\in\Omega).\]
In particular
\begin{equation}\label{t2.4}
\omega_f=\text{d}f-D_1f\text{d}x_1-\cdots -D_nf\text{d}x_n\in\Omega\qquad (f\in\mathcal F)
\end{equation}
for the choice $\varphi=\text{d}f.$

On this occasion, the interrelation between diffieties and the classical theory can be clarified as follows. Let us recall the subspace $\mathbf M\subset\mathbf M(m,n)$ given by requirements (\ref{t1.2}). Vector fields (\ref{t1.3}) are tangent to $\mathbf M$ and therefore may be regarded as vector fields on $\mathbf M$ and then identified with vector fields $D_1,\ldots,D_n$ of Definition~\ref{def2.2}. Also the contact forms (\ref{t1.4}) restricted to $\mathbf M$ are identical with forms (\ref{t2.1}) where $f=w^j_I.$ We conclude that the infinite prolongation of the classical differential equations represented by the Pfaffian system $\omega_f=0$ $(f\in\mathcal F)$ is a~diffiety. Conversely, every diffiety may be identified with such a~prolongation after the (in principle arbitrary) choice of independent and dependent variables, we refer to \cite{T6} for a~short proof.

We return to the general theory. Let us recall that diffieties $\Omega\subset\Phi$ are flat submodules with \emph{additional finite--dimensional requirements.} Our next aim is \emph{to delete such finiteness assumptions}, that is, to prove that certain flat submodules $\mathcal R\subset\Phi$ may be regarded for diffieties after an~appropriate adjustements.

\medskip

Our crucial lemma is as follows.
\begin{lemma}\label{l2.1}
Let $\Omega\subset\Phi$ be a~diffiety and $\mathcal R\subset\Omega$ a~flat submodule. There exists a~finite--dimensional submodule $\Gamma\subset\mathcal R$ such that
\begin{equation}\label{t2.5}
\mathcal R=\Gamma+\mathcal L_\mathcal H\Gamma+\mathcal L^2_\mathcal H\Gamma+\cdots\quad (\mathcal H=\mathcal H(\Omega))
\end{equation}
and
\begin{equation}\label{t2.6}
\mathcal R_*: \mathcal R_0\subset\mathcal R_1\subset\cdots\subset\mathcal R=\cup\mathcal R_l\quad(\mathcal R_l=\Gamma+\cdots +\mathcal L^l_\mathcal H\Gamma,\, \mathcal H=\mathcal H(\mathcal R))
\end{equation} is a~good filtration.
\end{lemma}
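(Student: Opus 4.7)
The plan is to transfer the finite generation from $\Omega$ to $\mathcal{R}$ through the Noetherian $\mathcal{A}$-module structure of $\mathcal{M}=\mathrm{Grad}\,\Omega_{\ast}$. Because $\mathcal{R}\subset\Omega$ forces $\mathcal{H}(\Omega)\subset\mathcal{H}(\mathcal{R})$, flatness of $\mathcal{R}$ yields $\mathcal{L}_{\mathcal{H}(\Omega)}\mathcal{R}\subset\mathcal{L}_{\mathcal{H}(\mathcal{R})}\mathcal{R}\subset\mathcal{R}$. Writing $\mathcal{H}=\mathcal{H}(\Omega)$ provisionally, the intersection filtration $\tilde{\mathcal{R}}_{l}:=\mathcal{R}\cap\Omega_{l}$ is therefore Lie-stable, $\mathcal{L}_{\mathcal{H}}\tilde{\mathcal{R}}_{l}\subset\mathcal{R}\cap\Omega_{l+1}=\tilde{\mathcal{R}}_{l+1}$, and the associated graded $\tilde{\mathcal{N}}=\bigoplus_{l}\tilde{\mathcal{R}}_{l}/\tilde{\mathcal{R}}_{l-1}$ embeds as a graded $\mathcal{A}$-submodule of $\mathcal{M}$.

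The Noetherian property of $\mathcal{M}$, already invoked in the derivation of the Hilbert polynomial (\ref{t2.3}), now supplies finitely many homogeneous generators $[\gamma_{1}],\ldots,[\gamma_{N}]$ of $\tilde{\mathcal{N}}$. I lift them to $\gamma_{i}\in\tilde{\mathcal{R}}_{l_{i}}\subset\mathcal{R}$ and set $\Gamma:=\mathcal{F}\gamma_{1}+\cdots+\mathcal{F}\gamma_{N}$. A short induction on $l$ then yields $\tilde{\mathcal{R}}_{l}\subset\Gamma+\mathcal{L}_{\mathcal{H}}\Gamma+\cdots+\mathcal{L}_{\mathcal{H}}^{l}\Gamma$: given $\omega\in\tilde{\mathcal{R}}_{l}$, represent $[\omega]\in\tilde{\mathcal{N}}_{l}$ as an $\mathcal{A}$-combination of the $[\gamma_{i}]$, lift that combination to $\omega'\in\sum_{k\leq l}\mathcal{L}_{\mathcal{H}}^{k}\Gamma$, and observe that $\omega-\omega'\in\tilde{\mathcal{R}}_{l-1}$ so that the inductive hypothesis applies. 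Exhausting $\mathcal{R}=\bigcup_{l}\tilde{\mathcal{R}}_{l}$ establishes (\ref{t2.5}); the inclusions $\mathcal{L}_{\mathcal{H}}\mathcal{R}_{l}\subset\mathcal{R}_{l+1}$ and the equalities $\mathcal{R}_{l}+\mathcal{L}_{\mathcal{H}}\mathcal{R}_{l}=\mathcal{R}_{l+1}$ demanded by the good-filtration clause (\ref{t2.6}) are then tautological from the telescoping definition of $\mathcal{R}_{l}$.

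I expect the principal technical obstacle to lie in reconciling the two different annihilators appearing in the statement: the Noetherian argument naturally delivers (\ref{t2.5}) with $\mathcal{H}=\mathcal{H}(\Omega)$, yet (\ref{t2.6}) declares the good filtration with the potentially larger $\mathcal{H}=\mathcal{H}(\mathcal{R})$. The inclusion $\mathcal{H}(\Omega)\subset\mathcal{H}(\mathcal{R})$ only enlarges each $\mathcal{R}_{l}$, so the union still exhausts $\mathcal{R}$; what requires care is that each enlarged $\mathcal{R}_{l}$ remains finite-dimensional. To control this I plan to exploit the $\mathcal{F}$-linear identity $\mathcal{L}_{fZ}\omega=f\mathcal{L}_{Z}\omega$ which follows from $Z\rfloor\omega=0$ for $Z\in\mathcal{H}(\mathcal{R})$ and $\omega\in\mathcal{R}$, together with the rank relation $\dim\mathcal{H}(\mathcal{R})-\dim\mathcal{H}(\Omega)=\dim(\Omega/\mathcal{R})$ inherited from the pairing $\Phi\times\mathcal{T}\to\mathcal{F}$, so as to bound the rank of each $\mathcal{L}_{\mathcal{H}(\mathcal{R})}^{k}\Gamma$ by data read off the original good filtration of $\Omega$.
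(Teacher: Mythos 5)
Your argument is essentially the paper's: intersect $\mathcal R$ with a good filtration of $\Omega$, observe that the associated graded module embeds as a graded $\mathcal A$-submodule of $\mathcal M=\mathrm{Grad}\,\Omega_*$, and invoke the Hilbert basis theorem, with $\mathcal H(\Omega)\subset\mathcal H(\mathcal R)$ handling the passage from (\ref{t2.5}) to (\ref{t2.6}). The only (harmless) difference is that the paper takes $\Gamma=\Omega_l\cap\mathcal R$ for one fixed large $l$ rather than lifting homogeneous generators, which makes your lifting induction unnecessary and renders the finite--dimensionality of the $\mathcal R_l$ in (\ref{t2.6}) immediate (via $\mathcal L_{fZ}\gamma=f\mathcal L_Z\gamma$ and the fact that each $\mathrm{d}\gamma$ involves finitely many differentials), with no need for your rank relation, whose two sides may anyway be infinite.
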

\begin{proof}
We take a~good filtration $\Omega_*$ and put $\mathcal R_l=\Omega_l\cap\mathcal R.$ Then the graded module $\mathcal N=\mathcal N_0\oplus\mathcal N_2\oplus\cdots\,$ where
\[\mathcal N_l=\mathcal R_l/\mathcal R_{l-1}=\Omega_l\cap\mathcal R/\Omega_{l-1}\cap\mathcal R\cong(\Omega_l\cap\mathcal R+\Omega_{l-1})/\Omega_{l-1}\subset\Omega_l/\Omega_{l-1}=\mathcal M_l\]
(isomorphism) is in fact a~graded $\mathcal A$--submodule of $\mathcal M.$ The Hilbert basis theorem from commutative algebra applies. It follows that
\[\mathcal H\mathcal N_l=\mathcal N_{l+1}\quad\text{hence}\quad \mathcal R_l+\mathcal L_\mathcal H\mathcal R_l=\mathcal R_{l+1}\qquad (l\text{ large enough},\,\mathcal H=\mathcal H(\Omega)).\] This implies (\ref{t2.5}) if $\Gamma=\mathcal R_l$ ($l$ fixed and large enough). Moreover $\mathcal H(\Omega)\subset\mathcal H(\mathcal R)$ and obviously (\ref{t2.6}) holds true. \end{proof}
\begin{theorem}\label{th2.1}
Let $\Omega\subset\Phi$ be a~diffiety and $\mathcal R\subset\Omega$ a~flat submodule of a~finite codimension. Then $\mathcal R\subset\Omega$ is a~diffiety too. \end{theorem}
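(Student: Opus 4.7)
The plan is to deduce the theorem almost directly from Lemma~\ref{l2.1}, which already provides the candidate good filtration; what is left is to verify the two ingredients of Definition~\ref{def2.1} that the lemma does not state outright, namely the finite codimension $\mathcal R\subset\Phi$ and the finite dimensionality of each member of the filtration.

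The codimension part is routine: since $\Omega$ is a diffiety we have $\dim\Phi/\Omega<\infty$, and the hypothesis gives $\dim\Omega/\mathcal R<\infty$, so $\dim\Phi/\mathcal R<\infty$ by composition; consequently the characteristic module $\mathcal H(\mathcal R)\subset\mathcal T$ has finite $\mathcal F$-rank equal to this codimension. Applying Lemma~\ref{l2.1} then produces a finite-dimensional $\Gamma\subset\mathcal R$ and the filtration
\[\mathcal R_l=\Gamma+\mathcal L_{\mathcal H(\mathcal R)}\Gamma+\cdots+\mathcal L^l_{\mathcal H(\mathcal R)}\Gamma\]
already asserted to be a good filtration of $\mathcal R$ in the sense of (\ref{t2.1}).

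The remaining point is that each $\mathcal R_l$ is a finite-dimensional (i.e.\ finitely generated) $\mathcal F$-module. The observation I would rely on is that for $Z\in\mathcal H(\mathcal R)$ and $\omega\in\mathcal R$ the characteristic identity $Z\rfloor\omega=0$ collapses the Lie derivative to
\[\mathcal L_Z\omega=Z\rfloor\mathrm d\omega+\mathrm d(Z\rfloor\omega)=Z\rfloor\mathrm d\omega,\]
which is $\mathcal F$-linear in $Z$; flatness, $\mathcal L_{\mathcal H(\mathcal R)}\mathcal R\subset\mathcal R$, lets this observation be iterated. Thus, fixing an $\mathcal F$-basis $D_1,\ldots,D_N$ of $\mathcal H(\mathcal R)$ and a finite generating set $\gamma_1,\ldots,\gamma_s$ of $\Gamma$, induction on $l$ shows that $\mathcal R_l$ is generated as an $\mathcal F$-module by the finite family of iterated Lie derivatives $\mathcal L_{D_{i_1}}\cdots\mathcal L_{D_{i_k}}\gamma_j$ with $0\leq k\leq l$, completing the verification.

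The only subtle step is precisely this $\mathcal F$-linearity check, since for a general $\omega$ one has $\mathcal L_{fZ}\omega=f\mathcal L_Z\omega+(Z\rfloor\omega)\,\mathrm df$, which is not $\mathcal F$-linear in $Z$; the argument closes up only because of the defining characteristic property $Z\rfloor\omega=0$ for $Z\in\mathcal H(\mathcal R)$ and $\omega\in\mathcal R$. All deeper content, in particular the Noetherian finiteness that guarantees $\mathcal R=\bigcup_l\mathcal R_l$, is already packaged inside Lemma~\ref{l2.1} via the Hilbert basis theorem applied to the graded $\mathcal A$-submodule $\mathcal N\subset\mathcal M$.
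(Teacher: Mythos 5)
Your proposal is correct and takes essentially the same route as the paper, whose entire proof is the single sentence that the theorem is a trivial consequence of Lemma~\ref{l2.1}. The two points you verify explicitly --- that finite codimension of $\mathcal R$ in $\Omega$ combines with $\dim\Phi/\Omega<\infty$ to give finite codimension in $\Phi$, and that the $\mathcal F$-linearity of $Z\mapsto Z\rfloor\mathrm d\omega$ on $\mathcal H(\mathcal R)$ makes each $\mathcal R_l$ finitely generated --- are precisely the details the paper packs into the word ``trivial,'' and you handle them correctly.
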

Theorem~\ref{th2.1} is a~trivial consequence of Lemma~\ref{l2.1}. Alas, the finite codimensionality assumption is rather restrictive for future needs. We shall soon see that it is satisfied in certain "economical underlying space" of the module $\mathcal R.$
\begin{remark}\label{rem1}
The finite--dimensional underlying spaces $\mathbf M$ will also appear in our reasonings. Then the diffieties $\Omega\subset\Phi(\mathbf M)$ simplify. We may choose the trivial filtration $\Omega_*: \Omega_0=\Omega_1=\cdots=\Omega$ and put $\nu=\nu(\Omega)=-1,$ $\mu(\Omega)=\dim\Omega.$ One can observe that $\Omega$ has a~basis consisting of certain total differentials $\text{d}f^1,\ldots,\text{d}f^\mu$ by applying the Frobenius theorem to $\Omega$ in the finite--dimensional space $\mathbf M.$ Even the curious  particular subcase $\Omega=\Phi$ and $n(\Omega)=0$ makes a~good sense and should not be completely ignored.
\end{remark}
%%%%%%%%%%%%%%%%%%%%%%%%%%%%%%%%%%%%%%%%%%%%%%

\section{Morphisms and projections}\label{sec3}
A~\emph{morphism} $\mathbf m: \mathbf M\rightarrow\mathbf N$ \emph{between manifolds} is defined by the property $\mathbf m^*\mathcal F(\mathbf N)\subset\mathcal F(\mathbf M).$ In terms of coordinates $h^i: \mathbf M\rightarrow\mathbb R$ and $k^ i: \mathbf N\rightarrow\mathbb R$ $(i=1,2,\ldots\,),$ certain $C^\infty$--smooth formulae \[\mathbf m^*k^i=K^i(h^1,\ldots,h^{m(i)})\qquad (i=1,2,\ldots\,)\] hold true.
Such a~morphism is called a~\emph{projection} (or: $\mathbf m$ is a~\emph{fibration} of~$\mathbf M$ with \emph{basis} $\mathbf N,$ or: $\mathbf N$ is a~\emph{factorspace} of~$\mathbf M$) if the family of functions $\mathbf m^*k^1,\mathbf m^*k^2,\ldots\in\mathcal F(\mathbf M)$ can be completed by appropriate functions of $\mathcal F(\mathbf M)$ to provide certain coordinates on $\mathbf M.$ (We also recall the common global definition: projection $\mathbf m$ is a~surjective submersion.)

We shall mainly deal with projections here. Then we occasionally abbreviate and even identify $f=\mathbf m^*f$ and $\varphi=\mathbf m^*\varphi$ which is possible since $\mathbf m^*$ is injective mapping. In more detail, we admit that
\begin{equation}\label{t3.1}
\mathcal F(\mathbf N)=\mathbf m^*\mathcal F(\mathbf N)\subset\mathcal F(\mathbf M),\ \Phi(\mathbf N)=\mathbf m^*\Phi(\mathbf N)\subset\Phi(\mathbf M) \end{equation}
may be regarded as $\mathcal F(\mathbf N)$--\emph{submodules} as well, according to the context.
(At this place, we apologize for such ``identifications'' and ``inclusions''. They are not formally correct. On the other hand, this point of view clarifies some constructions to appear and simplifies the formulation of the final result, Theorem~\ref{th3.2}.)
We also recall the $\mathbf m$--\emph{projectable vector fields} $Z\in\mathcal T(\mathbf M).$ They can be identified with the projections
\[\mathbf m_*Z\in\mathcal T(\mathbf N)\qquad ((\mathbf m_*Z)f=Z\mathbf m^*f,\, f\in\mathcal F(\mathbf N))\]
only modulo \emph{vertical} vector fields $V\in\mathcal T(\mathbf M)$ defined by the property
\[\mathbf m_*V=0\in\mathcal T(\mathbf N)\qquad (V\mathbf m^*k^i=0;\, i=0,1,\ldots\,).\]
The projections $\mathbf m$ and $\mathbf m_*$ are surjective.

The \emph{morphism between the submodules} $\Omega\subset\Phi(\mathbf M)$ and $\Theta\subset\Phi(\mathbf N)$ is defined by $\mathbf m^*\mathcal F(\mathbf N)\subset\mathcal F(\mathbf M)$ and $\mathbf m^*\Theta\subset\Omega.$ Let the morphism $\mathbf m$ be moreover a~projection. Then $\Theta=\mathbf m^*\Theta\subset\Omega$ may be regarded as $\mathcal F(\mathbf N)$--submodule of $\mathcal F(\mathbf M)$--module~$\Omega$ and, assuming this identification,
\[ \tilde\Theta=\mathcal F(\mathbf M)\Theta=\mathcal F(\mathbf M)\mathbf m^*\Theta\subset\Omega\] denotes the relevant
$\mathcal F(\mathbf M)$--submodule of $\Omega$ appearing after the extension of the coefficient ring. Alternatively saying, the $\mathcal F(\mathbf N)$--submodule $\Theta\subset\Omega$ generates the $\mathcal F(\mathbf M)$--submodule $\tilde\Theta\subset\Omega$.

Let the submodule $\Theta\subset\Phi(\mathbf N)$ be moreover flat. Then
\[\text{d}\tilde\Theta=\text{d}\mathcal F(\mathbf M)\wedge\Theta+\mathcal F(\mathbf M)\text{d}\Theta\cong 0\qquad (\text{mod}\,\Theta) \text{ hence } (\text{mod}\,\tilde\Theta)\]
and $\tilde\Theta$ also is flat. If in particular $\Omega\subset\Phi(\mathbf M)$ is a~diffiety, then Lemma~\ref{l2.1} can be applied and we see that submodule $\tilde\Theta \subset\Phi(\mathbf M)$ is a~\emph{pre--diffiety} in the sense that it satisfies all requirements of Definition~\ref{def2.1} except for the finite codimension. In fact this is only a~seeming defect.
Though both submodules $\tilde\Theta\subset\Omega\subset\Phi(\mathbf M)$ and $\Theta\subset\Phi(\mathbf N)$ need not be diffieties, the module $\Theta$ can be ``improved'' to become a~diffiety.

In more detail. In theory to follow, the module $\Theta$ is of a~mere subsidiary nature with respect to $\tilde\Theta.$ So we start with a~certain given flat submodule $\mathcal R\subset\Omega$ and our aim is to determine a~``good'' module $\Theta\subset\Phi(\mathbf N)$ such that $\tilde\Theta=\mathcal R.$
\begin{lemma}\label{l3.1}
Let a~flat submodule $\mathcal R\subset\Phi(\mathbf M)$ admit a~good filtration $(\ref{t2.6})$ where $\Gamma\subset\Phi(\mathbf M)$ is a~finite--dimensional submodule. There exists a~projection $\mathbf m: \mathbf M\rightarrow\mathbf N$ on certain space $\mathbf N$ and diffiety $\Theta\subset\Phi(\mathbf N)$ such that $\mathcal R=\mathcal F(\mathbf M)\mathbf m^*\Theta.$
\end{lemma}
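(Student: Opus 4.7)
The plan is to extract from $\mathcal R$ just enough structural data to reconstruct it as the $\mathcal F(\mathbf M)$-extension of a diffiety on a suitable factorspace. Since the good filtration $(\ref{t2.6})$ is produced from the \emph{finite}-dimensional seed $\Gamma$ by iterated Lie differentiation along $\mathcal H=\mathcal H(\mathcal R)$, all relevant information is carried by finitely many functions on $\mathbf M$ together with their iterated $\mathcal H$-derivatives. These functions should serve as coordinates on $\mathbf N$, and the generators of $\Gamma$, reinterpreted as forms on $\mathbf N$, should generate $\Theta$.

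Concretely, I would pick $\mathcal F(\mathbf M)$-generators $\gamma^1,\ldots,\gamma^p$ of $\Gamma$ and, in a local chart, write each in finite form $\gamma^a=\sum_k A^a_k\,\mathrm{d}B^a_k$ with $A^a_k,B^a_k\in\mathcal F(\mathbf M)$. Let $S_0$ be this finite collection. Fixing a generating family $\{X_\alpha\}\subset\mathcal H$ and exploiting $\mathcal L_X(g\,\mathrm{d}h)=(Xg)\,\mathrm{d}h+g\,\mathrm{d}(Xh)$, I would inductively enlarge $S_l$ to $S_{l+1}=S_l\cup\{X_\alpha f:\alpha,\,f\in S_l\}$; then every form in $\mathcal R_l=\Gamma+\cdots+\mathcal L^l_{\mathcal H}\Gamma$ admits an expression $\sum g_k\,\mathrm{d}h_k$ with $g_k,h_k$ in $S=\bigcup_l S_l$. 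The factorspace $\mathbf N$ is declared to be the manifold whose structural ring $\mathcal F(\mathbf N)$ is the $C^\infty$-subring of $\mathcal F(\mathbf M)$ generated by $S$ (completed by finitely many transverse coordinates, see below); the inclusion $\mathcal F(\mathbf N)\hookrightarrow\mathcal F(\mathbf M)$ determines the projection $\mathbf m:\mathbf M\to\mathbf N$ after $\mathcal F(\mathbf N)$ is complemented to a coordinate system of $\mathbf M$.

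I would then let $\theta^a=\sum_k\bar A^a_k\,\mathrm{d}\bar B^a_k\in\Phi(\mathbf N)$ be the downstairs counterpart of $\gamma^a$ (so $\mathbf m^*\theta^a=\gamma^a$), take $\Theta_0\subset\Phi(\mathbf N)$ to be the $\mathcal F(\mathbf N)$-submodule they span, and set $\Theta_{l+1}=\Theta_l+\mathcal L_{\mathcal H(\Theta)}\Theta_l$, $\Theta=\bigcup_l\Theta_l$. By construction the pullback $\mathbf m^*\Theta_l$ generates $\mathcal R_l$ over $\mathcal F(\mathbf M)$, whence $\mathcal F(\mathbf M)\mathbf m^*\Theta=\mathcal R$. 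Flatness of $\Theta$ transfers from that of $\mathcal R$: the latter gives $\mathrm{d}\tilde\Theta\equiv 0\pmod{\tilde\Theta}$ for $\tilde\Theta=\mathcal F(\mathbf M)\mathbf m^*\Theta=\mathcal R$, and pulling back along $\mathbf m$ (injectively) yields the same congruence on $\mathbf N$. Lemma~\ref{l2.1} applied downstairs to $\Theta$ with the finite-dimensional seed $\Theta_0$ then upgrades the above filtration to a good one.

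The main difficulty is ensuring that the constructed $\Theta$ has \emph{finite codimension} in $\Phi(\mathbf N)$, so that $\Theta$ is a genuine diffiety in the sense of Definition~\ref{def2.1} and not merely a pre-diffiety. This means $\mathbf N$ must be built so that only finitely many ``independent'' coordinates sit outside $S$, playing the role of the independent variables of $\Theta$; their number is forced by the Noetherian/Hilbert-polynomial behaviour of $\mathcal R_l$ already exploited in the proof of Lemma~\ref{l2.1}, which guarantees that only finitely many Lie directions $X_\alpha$ produce genuinely new degrees of freedom at each level. Making this count precise, and organising $S$ so that the residual transverse directions really form a finite coordinate system on $\mathbf N$, is the delicate point of the proof.
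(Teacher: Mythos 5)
Your construction follows essentially the same route as the paper: take finitely many generators of $\Gamma$, collect the finitely many functions needed to write them, close this set under the action of a family of vector fields in $\mathcal H(\mathcal R)$, and declare the resulting functions to be coordinates on $\mathbf N$, with $\Theta$ generated by the downstairs counterparts of the $\gamma^a$ and their Lie derivatives. The difficulty you flag at the end --- that $\Theta$ must have \emph{finite codimension} in $\Phi(\mathbf N)$, else it is only a pre--diffiety --- is exactly the crux of the lemma, and you leave it unresolved; as written, your argument proves only the pre--diffiety statement that was already available from Lemma~\ref{l2.1} and the discussion preceding the lemma. Moreover, the mechanism you propose for closing the gap (the Noetherian/Hilbert--polynomial behaviour of $\dim\mathcal R_l$) points in the wrong direction: that controls the growth of $\dim\mathcal R_l$ in $l$, not the number of independent directions transverse to $\Theta$ in $\mathbf N$.

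The paper closes the gap by a different, elementary observation. Choose the functions $x_1,x_2,\ldots$ whose differentials form a basis of $\Phi(\mathbf M)/\mathcal R$ and the dual basis $X_1,X_2,\ldots$ of $\mathcal H(\mathcal R)$ ($X_ix_j=\delta_{ij}$, $\mathcal R(X_i)=0$). Since $\gamma^1,\ldots,\gamma^K$ are expressible through finitely many coordinates $h^1,\ldots,h^R$, the classes of $\text{d}h^1,\ldots,\text{d}h^R$ span a subspace of $\Phi(\mathbf M)/\mathcal R$ of some finite dimension $C$, and the $x_i$ may be chosen so that $X_ih^r=0$ for all $i>C$. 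Consequently $\mathcal L_{X_i}\gamma^k=0$ for $i>C$, all the functions $h^r_I=X_{i_1}\cdots X_{i_r}h^r$ that matter carry indices $\le C$, and the projected fields $Z_i=\mathbf m_*X_i$ vanish on $\mathbf N$ for $i>C$. Hence $\mathcal H(\Theta)$ is generated by $Z_1,\ldots,Z_C$ and $n(\Theta)=\dim\Phi(\mathbf N)/\Theta$ is finite. You need this (or an equivalent) argument to make your ``finitely many transverse coordinates'' precise; without it the lemma is not proved. A smaller point: your one--line transfer of flatness from $\mathcal R$ to $\Theta$ also needs the surjectivity of $\mathbf m_*:\mathcal H(\mathcal R)\rightarrow\mathcal H(\Theta)$ (so that $\text{d}\vartheta(Y_1,Y_2)$ can be computed on lifts), which again comes for free from the explicit dual bases above but is not automatic from ``pulling back injectively''.
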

\begin{proof}
We take a~basis $\gamma^1,\ldots,\gamma^K$ of $\Gamma.$ Let $x_1,x_2,\ldots\in\mathcal F(\mathbf M)$ be functions such that differentials $\text{d}x_1,\text{d}x_2,\ldots$ provide a~basis of $\Phi(\mathbf M)/\mathcal R.$ Then vector fields $X_1,X_2,...\in\mathcal H(\mathcal R)$ defined by
\[X_ix_i=1,\ X_ix_j=\mathcal R(X_i)=0\qquad (i,j=1,2,\ldots;\, i\neq j)\]
provide a~basis of module $\mathcal H(\mathcal R).$ (We have the infinite number of "independent variables" for the module $\mathcal R$ here.) The forms
\begin{equation}\label{t3.3}
\gamma^k_I=\mathcal L_{X_{i_1}}\cdots\mathcal L_{X_{i_r}}\gamma^k\ \ (k=1,\ldots,K; i_1,\ldots,i_r=1,2,\ldots; r=0,1,\ldots \,)
\end{equation}
generate $\mathcal R.$ However forms $\gamma^1,\ldots,\gamma^K$ can be expressed in terms of a~finite number of coordinates $h^1,\ldots,h^R$ and it follows that only certain functions
\begin{equation}\label{t3.4}
h^r_I=X_{i_1}\cdots X_{i_r}h^r\qquad (r=1,\ldots,R; i_1,\ldots,i_r\leq C)
\end{equation}
are sufficient to express all forms (\ref{t3.3}). One can even suppose $X_ih^r=0$ $(r=1,\ldots,R)$ if $i>C.$

We are passing to the delicate part of the proof. Let $g^1,g^2,\ldots\in\mathcal F(\mathbf M)$ be a~largest functionally independent subset of the set of all functions (\ref{t3.4}). We introduce manifold $\mathbf N$ with coordinates $k^i:\mathbf N\rightarrow\mathbb R$ and projection $\mathbf m:\mathbf M\rightarrow\mathbf N$ defined by $\mathbf m^*k^i=g^i$ $(i=1,2,\ldots).$
There are forms $\vartheta^j_I\in\Phi(\mathbf N)$ and vector fields $Z_1,Z_2,\ldots\in\mathcal T(\mathbf N)$ such that
\[\mathbf m^*\vartheta^j_I=\gamma^j_I,\ \mathbf m_*X_i=Z_i\qquad (\text{all }j,i,I).\]
Then
\[\mathcal L_{Z_i}\vartheta^j_I=\vartheta^j_{Ii}.\ \vartheta^j_I(Z_i)=0\qquad (\text{all }j,i,I)\]
by direct verification. All forms $\vartheta^j_I\in\Phi(\mathbf N)$ generate a~submodule $\Theta\subset\Phi(\mathbf N)$ and vector fields $Z_1,Z_2,\ldots$ generate the submodule $\mathcal H(\Theta)\subset\mathcal T(\mathbf N).$ Clearly $Z_i=0$ $(i>C)$ and therefore $n(\Theta)=\dim\mathcal H(\Theta)$ is finite. Identifying $\vartheta^j_I=\mathbf m^*\vartheta^j_I=\gamma^j_I,$ the existence of a~good filtration $\Theta_*$ is obvious. It follows that $\Theta\subset\Phi(\mathbf N)$ is diffiety satisfying $\mathcal R=\mathcal F(\mathbf M)\Theta.$ \end{proof}
\begin{theorem}\label{th3.1}
Let $\Omega\subset\Phi(\mathbf M)$ be a~diffiety and $\mathcal R\subset\Omega$ a~flat submodule. There exists a~projection $\mathbf m:\mathbf M\rightarrow\mathbf N$ on appropriate space $\mathbf N$ and diffiety $\Theta\subset\Phi(\mathbf N)$ such that $\mathcal R=\mathcal F(\mathbf M)\mathbf m^*\Theta.$
\end{theorem}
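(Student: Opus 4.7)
The plan is to observe that Theorem~\ref{th3.1} is essentially the concatenation of the two results already in hand, namely Lemma~\ref{l2.1} and Lemma~\ref{l3.1}. The strategy is therefore to produce the hypothesis of Lemma~\ref{l3.1} from the hypothesis of the theorem by invoking Lemma~\ref{l2.1}, after which Lemma~\ref{l3.1} yields the desired projection and diffiety directly.

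First I would apply Lemma~\ref{l2.1} to the flat submodule $\mathcal R\subset\Omega$. Since $\Omega$ is a diffiety, this produces a finite--dimensional submodule $\Gamma\subset\mathcal R$ such that $\mathcal R = \Gamma + \mathcal L_{\mathcal H}\Gamma + \mathcal L^2_{\mathcal H}\Gamma + \cdots$ with $\mathcal H = \mathcal H(\mathcal R)$, and the associated filtration $\mathcal R_l = \Gamma + \cdots + \mathcal L^l_{\mathcal H}\Gamma$ is a good filtration. In particular, $\mathcal R\subset\Phi(\mathbf M)$ is flat and admits a good filtration of precisely the form (\ref{t2.6}) built from a finite--dimensional $\Gamma$.

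Next I would feed this data into Lemma~\ref{l3.1}. Its hypotheses are met verbatim, so the lemma produces a space $\mathbf N$, a projection $\mathbf m:\mathbf M\rightarrow\mathbf N$, and a diffiety $\Theta\subset\Phi(\mathbf N)$ with $\mathcal R = \mathcal F(\mathbf M)\,\mathbf m^*\Theta$. This is exactly the conclusion of Theorem~\ref{th3.1}, and no additional argument is needed.

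Since the proof is a mere combination of two previously established lemmas, there is really no substantial obstacle; the only thing worth flagging is that one must verify the hypotheses of Lemma~\ref{l3.1} are literally what Lemma~\ref{l2.1} delivers, which they are (the key points being the finiteness of $\Gamma$ and the description of the filtration as iterated Lie derivatives along $\mathcal H(\mathcal R)$). The genuine work, i.e.\ the construction of the manifold $\mathbf N$ via functionally independent representatives among the $X_{i_1}\cdots X_{i_r} h^r$ and the verification that the descended forms $\vartheta^j_I$ generate a diffiety on $\mathbf N$, has already been carried out inside the proof of Lemma~\ref{l3.1}.
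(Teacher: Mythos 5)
Your proposal is correct and coincides with the paper's own proof, which is exactly the one-line observation that Lemma~\ref{l2.1} supplies the good filtration (\ref{t2.6}) with finite-dimensional $\Gamma$ needed to invoke Lemma~\ref{l3.1}. Nothing further is required.
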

\begin{proof} Lemma~\ref{l2.1} ensures that Lemma~\ref{l3.1} can be applied. \end{proof}
\begin{remark}\label{remark2}
The proof of Lemma~\ref{l3.1} is of the independent interest since it contains explicit construction of the space $\mathbf N$ and the diffiety $\Theta\subset\Phi(\mathbf N).$ In applications to follow, the module $\mathcal R$ will be a~submodule of a~diffiety $\Omega\subset\Phi(\mathbf M).$ Then the functions $x_1,\ldots,x_n$ $(n=n(\Omega))$ appearing in the proof may be chosen as the independent variables of $\Omega,$ however, the vector fields $X_1,\ldots,X_n$ \emph{differ} from the total derivatives $D_1,\ldots,D_n$ though $D_1,\ldots,D_n\in\mathcal H(\mathcal R).$ On this occasion, we recall (\ref{t2.5}): already the forms
\[\mathcal L_{D_{i_1}}\cdots\mathcal L_{D_{i_k}}\gamma^k\qquad (k=1,\ldots,K;\,i_1,\ldots,i_r=1,\ldots,n;\,r=0,1,\ldots\,)\]
generate $\mathcal R,$ however, this fact is of a~little use for the proof. The crucial family of functions (\ref{t3.4}) with vector fields $X_1,X_2,\ldots$ cannot be ignored.

\medskip

The space $\mathbf N$ and the projection $\mathbf m$ are not unique but this does not matter in practice. Roughly saying, the use of the projection $\mathbf m: \mathbf M\rightarrow\mathbf N$ lies in the reduction of the (possibly) infinite--dimension of $\Phi(\mathbf M)/\mathcal R$ to the finite dimension of $\Phi(\mathbf N)/\Theta$ by deleting certain "parasite variables" of module $\mathcal R$ lying in $\mathcal F(\mathbf M)$ to obtain the "economical" space $\mathcal F(\mathbf N)$ and the "economical" module $\Theta.$
In fact there exists a~unique "minimal" space $\mathbf N$ without any "parasite variables", see Appendix. It is to be noted that all such calculations need a~bit of good luck, see the example of Section~\cite{sec6} below.
\end{remark}
\begin{remark}\label{rem2}
In the particular case of \emph{finite--dimensional} flat module $\mathcal R,$ much easier approach is possible. Due to Frobenius theorem, $\mathcal R$ does admit a~basis $\text{d}f^1,\ldots,\text{d}f^\mu$ consisting of total differentials. Functions $f^1,\ldots,f^\mu$ can be expressed in terms of finite number of coordinates $h^1,\ldots,h^R$ and then even the finite--dimensional space $\mathbf N$ with coordinates $k^1,\ldots,k^R$ and the projection $\mathbf m:\mathbf M\rightarrow\mathbf N$ defined by $\mathbf m^*k^i=h^i$ $(i=1,\ldots,R)$ resolve the problem. One can even choose $\mu$--dimensional space $\mathbf N$ with coordinates $h^1,\ldots,h^\mu$ and the projection $\mathbf m^*k^i=f^i$ $(i=1,\ldots,\mu).$ Then "the most economical" and "curious" diffiety $\Theta=\Phi(\mathbf N)$ mentioned in Remark~\ref{rem1} appears.
\end{remark}
\begin{definition}\label{def3.1}
Let $\Omega\subset\Phi(\mathbf M)$ be a~diffiety. Submodule $\mathcal R^k\subset\Omega$ of all forms $\omega\in\Omega$ such that
\begin{equation}\label{t3.5}\begin{array}{c}
\dim\{\omega,\mathcal L_\mathcal H\omega,\ldots,\mathcal L^l_\mathcal H\omega\}\leq c_k\binom lk+\cdots +c_0\binom l0\\
(l=0,1,\ldots; \mathcal H=\mathcal H(\Omega))\end{array}
\end{equation}
for appropriate integers $c_0=c_0(\omega),\ldots,c_k=c_k(\omega)$ is called $k$--th \emph{residual submodule} of $\Omega.$
\end{definition}There are inclusions
\begin{equation}\label{t3.6}
\mathcal R^0\subset\mathcal R^1\subset\cdots\subset\mathcal R^{\nu+1}=\Omega\qquad (\nu=\nu(\Omega)),
\end{equation}
however, we will omit the terms $\mathcal R^k$ such that $\mathcal R^k=\mathcal R^{k-1}$ in order to obtain only the proper inclusions in (\ref{t3.6}).
Every module $\mathcal R^k$ is flat, see below. Therefore Theorem~\ref{th3.1} ensures a~diffiety $\Theta=\Omega^k\subset\Phi(\mathbf N)$ and the projection $\mathbf m(k):\mathbf M\rightarrow\mathbf N$ such that
\begin{equation}\label{t3.7}
\mathcal R^k=\mathcal F(\mathbf M)\mathbf m(k)^*\Omega^k=\mathcal F(\mathbf M)\Omega^k.
\end{equation}
We denote $\mathbf M^k=\mathbf N$ for the better clarity from now on. The integers $c_0,\ldots,c_k$ in fact do not depend on the choice of the form $\omega$ and it follows that the solutions of diffiety $\Omega^k\subset\Phi(\mathbf M^k)$
depend on $c_k$ functions of $k$ variables, see below. For the convenience, we identify even $\mathcal R^k\cong\Omega^k$ and then the final achievement reads:
\begin{theorem}\label{th3.2}
Every diffiety admits a~unique composition series
\begin{equation}\label{t3.8}
\Omega^0\subset\Omega^1\subset\cdots\subset\Omega=\Omega^{\nu+1}\qquad (\nu=\nu(\Omega))
\end{equation}
(some terms may be absent) where $\Omega^k\subset\Phi(\mathbf M^k)$ is the maximal diffiety induced by $\Omega$ such that solution of $\Omega^k$ depends on $c_k>0$ functions of $k$ independent variables.
\end{theorem}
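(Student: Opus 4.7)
The plan is to verify three properties of the residual submodules $\mathcal{R}^k$ defined by (\ref{t3.5}) and then read off the theorem from them: (i) each $\mathcal{R}^k$ is an $\mathcal{F}$-submodule of $\Omega$; (ii) each $\mathcal{R}^k$ is flat; (iii) the integers $c_0,\ldots,c_k$ are intrinsic invariants of the submodule, coinciding with the Hilbert coefficients of $\Omega^k$. Given these, Theorem~\ref{th3.1} at once supplies the projection $\mathbf{m}(k):\mathbf{M}\to\mathbf{M}^k$ and the diffiety $\Omega^k\subset\Phi(\mathbf{M}^k)$ with $\mathcal{R}^k=\mathcal{F}(\mathbf{M})\mathbf{m}(k)^*\Omega^k$, after which maximality and uniqueness follow from the intrinsic characterisation.

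For (i), I would apply the iterated Leibniz identity
\[\mathcal{L}^l_X(f\omega)=\sum_{j=0}^{l}\binom{l}{j}(X^jf)\,\mathcal{L}^{l-j}_X\omega\]
to show the $\mathcal{F}$-span of $\{f\omega,\mathcal{L}_\mathcal{H}(f\omega),\ldots,\mathcal{L}^l_\mathcal{H}(f\omega)\}$ is contained in that of $\{\omega,\mathcal{L}_\mathcal{H}\omega,\ldots,\mathcal{L}^l_\mathcal{H}\omega\}$, preserving the dimensional bound (\ref{t3.5}); additivity is similar with a doubled constant. For (ii), the weaker stability $\mathcal{L}_{\mathcal{H}(\Omega)}\mathcal{R}^k\subset\mathcal{R}^k$ is immediate, since passing from $\omega$ to $\mathcal{L}_X\omega$ merely shifts the filtration index and enlarges the $c_i$'s. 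To promote this to $\mathcal{L}_{\mathcal{H}(\mathcal{R}^k)}\mathcal{R}^k\subset\mathcal{R}^k$ I would decompose an arbitrary $X\in\mathcal{H}(\mathcal{R}^k)$ as $X=\sum a_iD_i+Y$ against the basis of Definition~\ref{def2.2}, use the $\mathcal{F}$-stability just established to absorb the scalars $a_i$, and argue that the residual $Y$, being expressible through $\mathcal{H}(\Omega)$-vector fields modulo vertical components annihilating $\mathcal{R}^k$, cannot produce growth beyond the prescribed polynomial.

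For (iii), I would pass to the graded $\mathcal{A}$-module $\mathcal{M}=\mathrm{Grad}\,\Omega_*$ from (\ref{t2.2}) and let $\mathcal{N}^k\subset\mathcal{M}$ be the graded submodule spanned by classes of elements of $\mathcal{R}^k$. By Noetherianity of $\mathcal{A}$, $\mathcal{N}^k$ is finitely generated, so it carries a Hilbert polynomial $\dim\mathcal{N}^k_l=c_k\binom{l}{k}+\cdots+c_0\binom{l}{0}$ with intrinsic coefficients. The good filtration (\ref{t2.6}) from Lemma~\ref{l2.1} applied to $\mathcal{R}=\mathcal{R}^k$ and the coefficient-extension identity from Theorem~\ref{th3.1} transport this polynomial across $\mathbf{m}(k)^*$ and yield $\nu(\Omega^k)=k-1$, $\mu(\Omega^k)=c_k$, whence by (\ref{t2.3}) the solution of $\Omega^k$ depends on exactly $c_k$ functions of $k$ variables; the retention convention after (\ref{t3.6}) guarantees $c_k>0$ at the indices kept in the chain. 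The endpoint $\mathcal{R}^{\nu+1}=\Omega$ holds because the cumulative $\dim\Omega_l$ is a polynomial of degree $\nu+1$, so every $\omega\in\Omega$ trivially satisfies (\ref{t3.5}) for $k=\nu+1$. Finally, maximality and uniqueness follow jointly: if $\mathcal{S}\subset\Omega$ is any flat submodule whose induced diffiety has solutions depending on functions of at most $k$ variables, then its cumulative Hilbert function has degree $\leq k$, so every $\omega\in\mathcal{S}$ satisfies (\ref{t3.5}) and $\mathcal{S}\subset\mathcal{R}^k$.

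The principal obstacle is the flatness step (ii) with respect to the enlarged module $\mathcal{H}(\mathcal{R}^k)\supsetneq\mathcal{H}(\Omega)$: the identity $[X,Y]\rfloor\omega=-Y\rfloor\mathcal{L}_X\omega$ reduces flatness to $\mathcal{L}_{\mathcal{H}(\mathcal{R}^k)}\mathcal{R}^k\subset\mathcal{R}^k$, which is precisely what must be proved, so the decomposition against the total-derivative basis is doing the real work. A secondary subtlety is ensuring that the Hilbert polynomial of $\Omega^k$ attains degree exactly $k-1$ on the economical space $\mathbf{M}^k$ rather than dropping; this relies on the minimality property noted in Remark~\ref{remark2}, namely that Lemma~\ref{l3.1}'s construction of $\mathbf{M}^k$ excises only parasite variables.
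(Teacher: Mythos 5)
Your overall skeleton---define $\mathcal R^k$ by the growth condition (\ref{t3.5}), check that it is an $\mathcal F$-submodule, check flatness, invoke Theorem~\ref{th3.1} to obtain $\mathbf M^k$ and $\Omega^k$, and read off maximality and uniqueness from the intrinsic growth characterisation---is exactly the paper's (Definition~\ref{def3.1}, the remarks surrounding (\ref{t3.6})--(\ref{t3.7}), and Section~\ref{sec4}). Your steps (i) and (iii), the derivation of $\mathcal L_{\mathcal H(\Omega)}\mathcal R^k\subset\mathcal R^k$ from the Leibniz rule and the shift $\binom lk\mapsto\binom{l+1}k$, and the closing maximality argument are sound and correspond to the paper's ``uniqueness'' and ``universality'' paragraphs.

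The genuine gap is where you yourself locate the principal obstacle: promoting $\mathcal L_{\mathcal H(\Omega)}\mathcal R^k\subset\mathcal R^k$ to $\mathcal L_{\mathcal H(\mathcal R^k)}\mathcal R^k\subset\mathcal R^k$. Your decomposition $X=\sum a_iD_i+Y$ leaves a vertical remainder $Y\in\mathcal H(\mathcal R^k)$ annihilating every $\mathrm dx_i$, and such a $Y$ lies entirely outside $\mathcal H(\Omega)$ (whose only vertical element is $0$); for it, $\mathcal L_Y\rho=Y\rfloor\mathrm d\rho$ where $\mathrm d\rho\equiv\sum a_{ij}\varphi_i\wedge\varphi_j$ $(\mathrm{mod}\ \mathcal R^k)$ with the $\varphi_i$ a priori arbitrary forms transverse to $\mathcal R^k$, so ``cannot produce growth beyond the prescribed polynomial'' is precisely the assertion to be proved, not a consequence of the decomposition. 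The paper closes this step with the $\mathrm{Adj}$-module (Subsection~\ref{ss4.2} and the ``Adj module''/``flatness'' paragraphs of Section~\ref{sec4}): $\mathrm{Adj}\,\mathcal R^k$, generated by the $\rho$ together with all $X\rfloor\mathrm d\rho$ for $X\in\mathcal H(\mathcal R^k)$, is automatically flat, contains $\mathcal R^k$, inherits $\mathcal L_{\mathcal H(\Omega)}$-invariance, and---this is the substantive computation with the forms (\ref{t4.10})---obeys the same Hilbert growth bound, so the maximality of $\mathcal R^k$ forces $\mathrm{Adj}\,\mathcal R^k=\mathcal R^k$ and flatness comes for free. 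You would need either to reproduce this Cauchy-characteristic argument or to supply a genuine substitute; the total-derivative decomposition does not. (A secondary point: identifying the leading Hilbert data of $\mathcal R^k$ over $\mathbf M$ with those of $\Omega^k$ over $\mathbf M^k$ needs more care than ``transport across $\mathbf m(k)^*$'', since $n(\Omega^k)$ need not equal $n(\Omega)$---in Section~\ref{sec6} the reduced equation has three independent variables while $\Omega$ has two---though the paper itself is terse here.)
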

Before passing to examples, we return to the general theory of residual modules $\mathcal R^k$ since the original Definition~\ref{def3.1} obscures their position in diffiety $\Omega$ and is useless in practice.
%%%%%%%%%%%%%%%%%%%%%%%%%%%%%%%%%%%%%%%%%%%%%%%%
\section{On the residual submodules}\label{sec4}
The submodules $\mathcal R^k\subset\Omega$ of diffiety $\Omega\subset\Phi\,(=\Phi(\mathbf M))$ deserve systematical discussion.
For the better clarity, we survey the preparatory concepts.
\subsection{Orthogonal submodules}\label{ss4.1}
Let $\Theta\subset\Phi$ be a~submodule. Then $\mathcal H(\Theta)\subset\mathcal T\,(=\mathcal T(\mathbf M))$ is the submodule of all vector fields $X$ satisfying $\Theta(X)=0.$ Modules $\Theta$ and $\mathcal H(\Theta)$ determine each other. For every vector field $Z\in\mathcal T,$ the inclusions $\mathcal L_Z\Theta\subset\Theta$ and $\mathcal L_Z\mathcal H(\Theta)\subset\mathcal H(\Theta)$ are equivalent. We abbreviate $\mathcal H=\mathcal H(\Omega)$ for the fixed diffiety $\Omega$ under consideration.
\subsection{Adjoint submodules}\label{ss4.2}
Let $\Theta\subset\Phi$ be a~submodule. Then $\text{Adj}\,\Theta\subset\Phi$ is the submodule generated by all forms
\[\vartheta,\mathcal L_X\vartheta=X\rfloor \text{d}\vartheta\qquad (\vartheta\in\Theta, X\in\mathcal H(\Theta)).\]
In alternative definition \cite{T4}, submodule $\mathcal H(\text{Adj}\,\Theta)\subset\mathcal T$ involves all vector fields $Y$ satisfying $\mathcal L_{fY}\Theta\subset\Theta$ for all $f\in\mathcal F\,(=\mathcal F(\mathbf M)),$ see the Appendix. It follows easily that $\text{Adj}\,\Theta$ is flat and for every $Z\in\mathcal T,$ the inclusion $\mathcal L_Z\Theta\subset\Theta$ implies $\mathcal L_Z\text{Adj}\,\Theta\subset\text{Adj}\,\Theta.$ Trivially $\Theta\subset\text{Adj}\,\Theta.$
\subsection{Kernel submodules}\label{ss4.3}
Let $\Theta\subset\Phi$ be a~submodule and $X\in\mathcal H(\Theta).$ Then $\text{Ker}_X\Theta\subset\Theta$ is the submodule of forms $\vartheta$ satisfying $\mathcal L_X\vartheta=X\rfloor\text{d}\vartheta\in\Theta.$ If $\Theta\subset\Omega$ is a~submodule of diffiety $\Omega,$ the submodules $\text{Ker}_X\Theta\subset\Theta$ $(X\in\mathcal H)$ make a~good sense.
\subsection{Hilbert polynomials}\label{ss4.4}
Equation (\ref{t2.3}) together with $\dim\Omega_l=\dim\mathcal M_0+\cdots +\dim\mathcal M_l$ implies that
\[\dim\Omega_l=c_{\nu+1}\binom l{\nu+1}+\cdots+c_0\binom l0\quad (l\text{ large enough}, c_{\nu+1}=e_\nu).\]
This is a~mere alternative (and intuitively better) transcription of~(\ref{t2.3}).
We recall that the solution of $\Omega$ depends on $c_{\nu+1}=\mu(\Omega)\geq 1$ functions of $\nu+1$ variables.
We suppose $\nu=\nu(\Omega)\geq 0$ here. If $\mathbf M$ is of a~finite dimension then $\dim\Omega_l=\dim\Omega=c_0=\mu(\Omega)$ for $l$ large enough.
\subsection{Other filtrations}\label{ss4.5}
To the primary filtration $\Omega_*,$ we introduce filtrations
\[\begin{array}{ll}
\Omega(Z_1)_*:\Omega(Z_1)_0\subset\Omega(Z_1)_1\subset\cdots\subset\Omega& (\Omega(Z_1)_l=\sum\mathcal L^k_{Z_1}\Omega_l),\\
\Omega(Z_1,Z_2)_*:\Omega(Z_1,Z_2)_0\subset\Omega(Z_1,Z_2)_1\subset\cdots\subset\Omega&(\Omega(Z_1,Z_2)_l=\sum\mathcal L^k_{Z_2}\Omega(Z_1)_l),\\
\cdots&\end{array}\]
of the same diffiety $\Omega$ where $Z_1,Z_2,\ldots\in\mathcal H$ are vector fields. The inclusions
\[\mathcal L_{Z_i}\Omega(Z_1,\ldots,Z_r)_l\subset\Omega(Z_1,\ldots,Z_r)_l\qquad (i=1,\ldots,r;\,l=0,1,\ldots)\]
are trivial.
\subsection{Not too special vector fields}\label{ss4.6}
Such vector fields $Z_1,\ldots,Z_n\in\mathcal H$ $(n=n(\Omega))$ are defined by the properties
\[\text{Ker}_{Z_1}\Omega_{l+1}=\Omega_l,\text{Ker}_{Z_2}\Omega(Z_1)_{l+1}=\Omega(Z_1)_l,\ldots\quad (l\text{ large enough}).\]Though the existence is nontrivial in full generality \cite{T6}, particular examples do not cause any difficulty. The practical rule is as follows: the modules $\text{Ker}_{Z_i}$ should be of the minimal possible dimension and this property survives small perturbations. One can even employ the total derivatives $Z_i=D_i$ for ``not too special'' choice of the independent variables.
%%%%%%%%%%%%%%%%%%%%%%%%%%%%%%%%%%%%%%%%%%%%%%%%%%%%%%
\section*{The submodule $\mathcal R^0\subset\Omega$}
%%%%%%%%%%%%%%%%%%
\emph{The existence.}
For $l\geq 0$ and $Z_1\in\mathcal H$ fixed, the series of proper inclusions
\begin{equation}\label{t4.1}\Theta\supset\text{Ker}_{Z_1}\Theta\supset\text{Ker}_{Z_1}^2\Theta\supset\cdots\qquad (\Theta=\Omega_l)\end{equation}
is finite. Indeed, the sequence
\[\dim\Theta\geq\dim\text{Ker}_{Z_1}\Theta\geq\dim\text{Ker}_{Z_1}^2\Theta\geq\cdots \]
becomes stationary at the finite length. Denoting
\begin{equation}\label{t4.2}\mathcal R^0(l)=\text{Ker}_{Z_1}^k\Theta=\text{Ker}_{Z_1}^k\Omega_l\qquad (k \text{ large enough}) \end{equation}
for this moment, then $\mathcal R^0(l)\subset\Omega_l$ is the largest submodule satisfying the inclusion $\mathcal L_{Z_1}\mathcal R^0(l)\subset\mathcal R^0(l).$

\medskip%%%%%%%%%%%%%%%%%%
\emph{The uniqueness.}
Let $Z_1$ be not too special from now on. Then the module $\mathcal R^0(l)=\mathcal R^0$ does not depend on $l$ if this $l$ is large enough. Moreover a~form $\omega\in\Omega$ (hence $\omega\in\Omega_l$ with $l$ large enough) lies in $\mathcal R^0$ if and only if
\begin{equation}\label{t4.3}\dim\{\omega,\mathcal L_{Z_1}\omega,\ldots,\mathcal L_{Z_1}^k\Omega\}\leq C_0\end{equation}
(where $C_0=\dim\mathcal R^0$) as follows by direct inspection. Due to criterion (\ref{t4.3}), the choice of the original filtration $\Omega_*$ is irrelevant.

\medskip%%%%%%%%%%%%%%%%%%
\emph{The universality.}
If $l$ is large enough, the sequence (\ref{t4.1}) hence the result $\mathcal R^0$ does not depend on the choice of the vector field $Z_1.$ We refer to \cite{T6} for a~direct elementary proof. It follows that $\mathcal L_\mathcal H\mathcal R^0\subset\mathcal R^0.$
Moreover a~form $\varphi\in\Phi$ is lying in $\mathcal R^0$ if and only if
\begin{equation}\label{t4.4}\dim\{\varphi,\mathcal L_\mathcal H\varphi,\ldots,\mathcal L_\mathcal H^k\varphi\}\leq C_0.\end{equation}
Indeed, condition (\ref{t4.4}) (more precisely: already condition $\dim\{\mathcal L_\mathcal H\varphi\}\leq C_0$) implies $\varphi=\omega\in\Omega$ and then (\ref{t4.3}) may be applied. Clearly $\mathcal R^0\subset\Omega$ (equivalently: $\mathcal R^0\subset\Phi$) is the largest finite--dimensional module (equivalently: module contained in $\Omega_l$ with $l$ large enough) such that $\mathcal L_\mathcal H\mathcal R^0\subset\mathcal R^0.$

\medskip%%%%%%%%%%%%%%%%%%
\emph{The Adj module.}
Let the forms $\rho^1,\ldots,\rho^R$ generate module $\mathcal R^0.$ Then the forms
\[\rho^r,\mathcal L_X\rho^r=X\rfloor\text{d}\rho^r\qquad (r=1,\ldots, R;\,X\in\mathcal H(\mathcal R^0))\]
generate $\text{Adj}\,\mathcal R^0$ which is therefore a~finite--dimensional module.

\medskip%%%%%%%%%%%%%%%%%%
\emph{The flatness.}
We recall the inclusion $\mathcal L_\mathcal H\mathcal R^0\subset\mathcal R^0$ which implies $\mathcal L_\mathcal H\text{Adj}\,\mathcal R^0\subset\text{Adj}\,\mathcal R^0$ hence $\text{Adj}\,\mathcal R^0\subset\Omega$ and therefore $\text{Adj}\,\mathcal R^0\subset\Omega_l$ if $l$ is large enough. Due to the maximality property of $\mathcal R^0,$ we conclude that $\text{Adj}\,\mathcal R^0\subset\mathcal R^0$ hence $\text{Adj}\,\mathcal R^0=\mathcal R^0$ is flat.
%%%%%%%%%%%%%%%%%%%%%%%%%%%%%%%%%%%%%%%%%%%%%%%%%%%%%%
\section*{The submodule $\mathcal R^1\subset\Omega$}
The reasonings will be repeated with the only technical change: instead of the finite--dimensional \emph{modules} like $\Omega_l, \text{Ker}_Z\Omega_l, \mathcal R^0(l), \mathcal R^0, \text{Adj}\,\mathcal R^0$ and estimates like $\dim\{\cdots\}\leq const.,$ we deal with the \emph{filtrations} estimated by the first--order Hilbert polynomials. Expressively saying, operator $\mathcal L_{Z_1}$ was applied to \emph{forms} $\omega\in\Omega$ in the case of module $\mathcal R^0$ and we will apply operator $\mathcal L_{Z_2}$ to the \emph{strings} $\omega,\mathcal L_{Z_1}\Omega,\mathcal L_{Z_1}^2\Omega,\ldots$ to obtain module $\mathcal R^1.$

\medskip%%%%%%%%%%%%%%%%%%
\emph{The existence.}
For $l\geq 0$ and $Z_2\in\mathcal H$ fixed, the series of proper inclusions
\begin{equation}\label{t4.5}\Theta\supset\text{Ker}_{Z_2}\Theta\supset\text{Ker}_{Z_2}^2\Theta\supset\cdots\qquad (\Theta=\Omega(Z_1)_l)\end{equation}
is finite, see below.  Denoting
\begin{equation}\label{t4.6}\mathcal R^1(l)=\text{Ker}_{Z_2}^k\Theta=\text{Ker}_{Z_2}^k\Omega(Z_1)_l, \end{equation}
then $\mathcal R^1(l)\subset\Omega(Z_1)_l$ is the largest submodule with $\mathcal L_{Z_i}\mathcal R^1(l)\subset\mathcal R^1(l)$ $(i=1,2).$

Let us deal with the stationarity of (\ref{t4.5}). We introduce filtration
\[\Theta_*: \Theta_0\subset\Theta_1\subset\cdots\subset\Theta\qquad (\Theta_r=\Omega_l+\mathcal L_{Z_1}\Omega_l+\cdots +\mathcal L_{Z_1}^k\Omega_l)\]
where \[\dim\Theta_r=c_1\binom r1+c_0\binom r0=c_1r+c_0\qquad (r \text{ large enough}).\]
In more generality, we have filtrations
\[\Theta_*^k: \Theta_0^k\subset\Theta_1^k\subset\cdots\subset\Theta^k=\text{Ker}_{Z_2}^k\Theta\qquad (\Theta_r^k=\Theta_r\cap\text{Ker}_{Z_2}^k\Theta),\]
where
\[\dim\Theta_r^k=c_1^kr+c_0^k\ (r\text{ large enough}),\  c_1=c_1^0\geq c^1_1\geq c^2_1\geq\cdots\,.\]
Then $c_1^k=c_1^{k+1}=\cdots\ $ for $k$ large enough and this ensures the desired stationarity.

\medskip%%%%%%%%%%%%%%%%%%
\emph{The uniqueness.}
Let $Z_2$ be not too special from now on. Then $\mathcal R^1(l)=\mathcal R^1$ is independent of $l$ if $l$ is large enough. Moreover a~form $\omega\in\Omega$ lies in $\mathcal R^1$ if and only if
\begin{equation}\label{t4.7}\dim\{\omega,\ldots,\mathcal L_{Z_1}^r\mathcal L_{Z_2}^s\omega,\ldots,\mathcal L_{Z_1}^k\Omega,\mathcal L_{Z_2}^k\Omega\}\leq C_1k+C_0\qquad (r+s\leq k)\end{equation}
for certain integers $C_1$ and $C_0.$ (This again follows by a~direct inspection. The integers $C_1$ and $C_0$ are determined by obvious filtration of module $\mathcal R^1,$ in particular $C_1=c_1^r$ for $r$ large enough.) It follows that the choice of the original filtration $\Omega_*$ is irrelevant.

\medskip%%%%%%%%%%%%%%%%%%
\emph{The universality.}
If $l$ is large enough, the sequence (\ref{t4.5}) hence the result $\mathcal R^1$ does not depend on the choice of the vector field $Z_2,$ see \cite{T6}. It follows that $\mathcal L_\mathcal H\mathcal R^1\subset\mathcal R^1.$
Moreover a~form $\varphi\in\Phi$ is lying in $\mathcal R^1$ if and only if
\begin{equation}\label{t4.8}\dim\{\varphi,\mathcal L_\mathcal H\varphi,\ldots,\mathcal L_\mathcal H^k\varphi\}\leq C_1k+C_0.\end{equation}
Clearly $\mathcal R^1$ is the largest module which is contained in $\Omega(Z_1)_l$ with $l$ large enough and satisfying $\mathcal L_\mathcal H\mathcal R^1\subset\mathcal R^1.$

\medskip%%%%%%%%%%%%%%%%%%
\emph{The Adj module.}
Let the forms 
\begin{equation}\label{t4.9}\rho^r,\mathcal L_{Z_1}\rho^r,\mathcal L_{Z_1}^2\rho^r,\ldots\qquad (r=1,\ldots, R)\end{equation}
generate module $\mathcal R^1.$ Then the forms (\ref{t4.9}) together with all forms
\[X\rfloor\text{d}\rho^r,X\rfloor\text{d}\mathcal L_{Z_1}\rho^r,X\rfloor\text{d}\mathcal L_{Z_1}^2\rho^r,
\qquad (r=1,\ldots, R;\,X\in\mathcal H(\mathcal R^1))\]
generate module $\text{Adj}\,\mathcal R^1.$ Assuming
\[\begin{array}{ll}
\text{d}\rho^r\cong\sum a^r_{ij}\varphi_i\wedge\varphi_j&(\text{mod }\mathcal R^1),\\
\text{d}\mathcal L_{Z_1}\rho^r=\mathcal L_{Z_1}\text{d}\rho^r\cong\sum Z_1a^r_{ij}\,\varphi_i\wedge\varphi_j+\sum a^r_{ij}\mathcal L_{Z_1}(\varphi_i\wedge\varphi_j)&(\text{mod }\mathcal R^1),\\
\cdots
\end{array}\]
(use $\mathcal L_{Z_1}\mathcal R^1\subset\mathcal R^1$) we conclude that module $\text{Adj}\,\mathcal R^1$ is involved in the module generated by the forms
\begin{equation}\label{t4.10}
\varphi_i,\varphi_j,\mathcal L_{Z_1}\varphi_i,\mathcal L_{Z_1}\varphi_j,\mathcal L_{Z_1}^2\varphi_i,\mathcal L_{Z_1}^2\varphi_j,\ldots\,.\end{equation}
On the other hand $\mathcal L_\mathcal H\mathcal R^1\subset\mathcal R^1$ hence $\mathcal L_\mathcal H\text{Adj}\,\mathcal R^1\subset\text{Adj}\,\mathcal R^1$ which implies that $\text{Adj}\,\mathcal R^1\subset\Omega.$ Altogether it follows that $\text{Adj}\,\mathcal R^1$ is contained in all modules $\Omega(Z_1)_l$ if $l$ is large enough.

\medskip%%%%%%%%%%%%%%%%%%
\emph{The flatness.}
The above inclusion  $\mathcal L_\mathcal H\text{Adj}\,\mathcal R^1\subset\text{Adj}\,\mathcal R^1$ implies $\text{Adj}\,\mathcal R^1\subset\mathcal R^1$ hence $\text{Adj}\,\mathcal R^1=\mathcal R^1$ and this is a~flat module.

%%%%%%%%%%%%%%%%%%%%%%%%%%%%%%%%%%%%%%%%%%%%%%%%%%%%%%
\section*{Remaining submodules $\mathcal R^k\subset\Omega$}
The above reasonings can be again literally repeated with the only change that the higher--order Hilbert polynomials estimating the filtrations appear. One can prove that the construction becomes trivial if $k\geq\nu(\Omega)$ since
\[\Omega(Z_1,\ldots,Z_\nu)_l=\Omega(Z_1,\ldots,Z_{\nu+1})_l=\cdots =\Omega\qquad (l \text{ large enough}, \nu=\nu(\Omega))\]
for every not too special sequence $Z_1,Z_2,\ldots\in\mathcal H$ and $\mathcal R^\nu=\mathcal R^{\nu+1}=\cdots=\Omega.$

%%%%%%%%%%%%%%%%%%%%%%%%%%%%%%%%%%%%%%%%%%%%%%%%%%%%%%
\section{Example: ordinary differential equations}\label{sec5}
In the particular case $n=n(\Omega)=1$ of one independent variable, the controllability of Pfaffian systems in finite--dimensional spaces can be thoroughly described in terms of the Lie brackets $[X,Y]$ where $X,Y$ are vector fields satisfying the Pfaffian system, see \cite{T9} and references therein. Our approach is quite other.

Let us deal with the system
\[\frac{du}{dx}=F(x,u,v,w,\frac{dw}{dx}), \frac{dv}{dx}=G(x,u,v,w,\frac{dw}{dx})\quad (u=u(x), v=v(x), w=w(x)).\]
The corresponding diffiety $\Omega$ describes the infinite prolongation
\[\frac{du}{dx}=F(x,u,v,w_0,w_1), \frac{dv}{dx}=G(x,u,v,w_0,w_1), \frac{dw_r}{dx}=w_{r+1}\quad (r=0,1,\ldots)\]
of the system.

We introduce the space $\mathbf M$ with coordinates $x,u,v,w_0,w_1,\ldots\,,$ the submodule $\Omega\subset\Phi(\mathbf M)$ with the basis
\[\alpha=du-Fdx,\ \beta=dv-Gdx,\ \gamma_r=dw_r-w_{r+1}dx\quad (r=0,1,\ldots)\]
and the vector field
\[X=\frac{\partial}{\partial x}+F\frac{\partial}{\partial u}+G\frac{\partial}{\partial v}+\sum w_{r+1}\frac{\partial}{\partial w_r}\]
which is a~basis of $\mathcal H(\Omega).$ Clearly
\[\mathcal L_X\alpha=F_u\alpha+F_v\beta+F_{w_0}\gamma_0+F_{w_1}\gamma_1,\ \mathcal L_X\beta=G_u\alpha+G_v\beta+G_{w_0}\gamma_0+G_{w_1}\gamma_1\]
and $\mathcal L_X\gamma_r=\gamma_{r+1}$ $(r=0,1,\ldots).$ If $\Omega_l\subset\Omega$ $(l=0,1,\ldots)$ is the submodule generated by $\alpha,\beta,\gamma_0,\ldots,\gamma_l$ then $\Omega_*:\Omega_0\subset\Omega_1\subset\cdots$ is a~good filtration. Therefore $\Omega$ is a~diffiety with $n=n(\Omega)=\dim\mathcal H(\Omega)=1.$ Moreover $x_1=x$ is independent variable and $D_1=X$ the total derivative.

Clearly $\text{Ker}_X\Omega_{l+1}=\Omega_l$ $(l\geq 0)$ and the forms
\[\xi=\alpha-F_{w_1}\gamma_0,\ \zeta=\beta-G_{w_1}\gamma_0\] generate the submodule $\text{Ker}_X\Omega_0\subset\Omega_0.$ Passing to the next submodule, we state the formulae
\[\begin{array}{c}\mathcal L_X\xi=F_u\alpha+F_v\beta+(F_{w_0}-XF_{w_1})\gamma_0=F_u\xi+F_v\zeta+P\gamma_0,\\
\mathcal L_X\zeta=G_u\alpha+G_v\beta+(G_{w_0}-XG_{w_1})\gamma_0=G_u\xi+G_v\zeta+Q\gamma_0\end{array}\]
where
\[P=F_uF_{w_1}+F_vG_{w_1}+F_{w_0}-XF_{w_1},\ Q=G_uF_{w_1}+G_vG_{w_1}+G_{w_0}-XG_{w_1}.\]

Three subcases should be distinguished.

\medskip

If $P=Q=0$ identically then $\mathcal R^0=\text{Ker}_X\Omega_0.$ The general theory ensures that $\mathcal R^0$ is flat hence has a~certain alternative basis \[dU, dV\qquad (U=U(x,u,v,w_0), V=V(x,u,v,w_0)).\] So we have diffiety $\Omega^0\subset\Phi(\mathbf N)$ in the space $\mathbf N$ with coordinates $x,U,V.$ It corresponds to the determined system of differential equations \[\frac{dU}{dx}=0, \frac{dV}{dx}=0.\]
Quite explicit formulae for the functions $F,G,U,V$ in this subcase can be obtained but we omit details.

If either $P\neq 0$ or $Q\neq 0,$ then the form $\gamma=Q\xi-P\zeta$ generates module $\text{Ker}_X^2\Omega_0$ since
\[\mathcal L_X\gamma=XQ\,\xi- XP\,\zeta+Q(F_u\xi+F_v\zeta)-P(G_u\xi+G_v\zeta).\]
In general $\mathcal L_X\gamma$ is not a~multiple of $\gamma$ and then $\mathcal R^0=\text{Ker}_X^2\Omega_0=0$ is trivial.
Otherwise we obtain one--dimensional module $\mathcal R^0$ with the basis $\gamma.$ Since $\gamma$ is a~multiple of a~differential $dU$ $(U=U(x,u,v,w_0)),$ we obtain the diffiety $\Omega_0\subset\Phi(\mathbf N)$ in the space $\mathbf N$ with coordinates $x,U.$

In both above subcases, the space $\mathbf N$ naturally appears as a~factorspace of $\mathbf M.$ It is not the most economical one since variable $x$ may be in fact omitted and we obtain the ``curious diffiety'' of Remark~\ref{rem1}.

%%%%%%%%%%%%%%%%%%%%%%%%%%%%%%%%%%%%%%%%%%%%%%%%%%%%%%
\section{Example: partial differential equations}\label{sec6}
While the residual module $\mathcal R^0$ is intuitively simple tradicional concept, the subsequent modules $\mathcal R^k$ $(k>0)$ are not so clear. Recall that they determine certain unique ``simplified projections'' of the original system of differential equations and essentially differ from the well--known reductions based on the Lie--group symmetries \cite{T12}--\cite{T15}. We intent to clarify the above abstract theory by means of explicit example of the module $\mathcal R^1.$ A~somewhat unusual strain of reasonings should be expected.
%%%%%%%%%%%
\subsection{The differential equation} We introduce the equation
\[\frac{\partial v}{\partial y}=F(x,y,u,v,\frac{\partial u}{\partial x}, \frac{\partial v}{\partial x},\frac{\partial u}{\partial y})\qquad (u=u(x,y), v=v(x,y))\]
together with the prolongation
\[\frac{\partial v_0}{\partial y}=F(x_1,x_2,u_{00},v_0,u_{10},v_1,u_{01}),\]
\[\frac{\partial u_{rs}}{\partial x_1}=u_{r+1,s}, \frac{\partial v_r}{\partial x_1}=v_{r+1}, \frac{\partial u_{rs}}{\partial x_2}=u_{r,s+1}, \frac{\partial v_r}{\partial x_2}=\frac{d^r}{dx_1^r}F\quad (r,s=0,1,\ldots)\]
where the alternative notation is better adapted for the general theory.
%%%%%%%%%%%
\subsection{The corresponding diffiety} We introduce the space $\mathbf M$ with coordinates
\[x_1,x_2,u_{rs},v_r\qquad (r,s=0,1,\ldots),\]
the submodule $\Omega\subset\Phi(\mathbf M)$ with the basis
\[\alpha_{rs}=du_{rs}-u_{r+1,s}dx_1-u_{r,s+1}dx_2,\ \beta_r=dv_r-v_{r+1}dx_1-D_1^rFdx_2\]
and the vector fields
\[D_1=\frac{\partial}{\partial x_1}+\sum u_{r+1,s}\frac{\partial}{\partial u_{rs}}+\sum v_{r+1}\frac{\partial}{\partial v_r},
\ D_2=\frac{\partial}{\partial x_2}+\sum u_{r,s+1}\frac{\partial}{\partial u_{rs}}+\sum D_1^rF\frac{\partial}{\partial v_r}\]
which provide a~basis of module $\mathcal H(\Omega).$ Clearly
\begin{equation}\label{t6.1}
\begin{array}{l}
\mathcal L_{D_1}\alpha_{rs}=\alpha_{r+1,s},\ \mathcal L_{D_2}\alpha_{rs}=\alpha_{r,s+1},\ \mathcal L_{D_1}\beta_r=\beta_{r+1},\\ 
\mathcal L_{D_2}\beta_0=F_{u_{00}}\alpha_{00}+F_{v_0}\beta_0+F_{u_{10}}\alpha_{10}+F_{v_1}\beta_1+F_{u_{01}}\alpha_{01},\\
\mathcal L_{D_2}\beta_r=\mathcal L_{D_2}\mathcal L_{D_1}^r\beta_0=\mathcal L_{D_1}^r\mathcal L_{D_2}\beta_0=\mathcal L_{D_1}^r(F_{u_{00}}\alpha_{00}+\cdots +F_{u_{01}}\alpha_{01}). \end{array}\end{equation}
If $\Omega_l\subset\Omega$ $(l=0,1,\ldots)$ is the submodule generated by forms $\alpha_{rs}, \beta_r$ $(r+s\leq l, r\leq l)$ then we obtain a~good filtration $\Omega_*:\Omega_0\subset\Omega_1\subset\cdots$ of $\Omega.$ It follows that $\Omega$ is a~diffiety. Clearly $n=n(\Omega)=\dim\mathcal H(\Omega)=2,$ $x_1$ and $x_2$ are independent variables with  $D_1$ and $D_2$ the total derivatives. Moreover $\nu=\nu(\Omega)=1,$ $\mu=\mu(\Omega)=1.$
%%%%%%%%%%%
\subsection{The triviality of $\mathcal R^0$} Clearly
\[\text{Ker}_{D_1}\Omega_{l+1}=\Omega_l\quad (l\geq 0),\ \text{Ker}_{D_1}\Omega_0=0.\] The sequence (\ref{t4.1}) terminates with the trivial stationarity $\mathcal R^0=0,$ there do not exist first integrals.
%%%%%%%%%%%
\subsection{Towards the module $\mathcal R^1$} Recalling (\ref{t4.5}), we introduce the submodules $\Omega(D_1)_l\subset\Omega$ $(l=0,1,\ldots)$ with the basis
\[\alpha_{rs}=\mathcal L_{D_1}^r\alpha_{0s},\ \beta_r=\mathcal L_{D_1}^r\beta_0\qquad (r=0,1,\ldots;\, s\leq l).\]
Clearly
\[\text{Ker}_{D_2}\Omega(D_1)_{l+1}=\Omega(D_1)_l\qquad (l\geq 0)\]
but the case $l=0$ is more interesting. Using (\ref{t6.1}), one can infer that
\begin{equation}\label{t6.2}\begin{array}{c} \mathcal L_{D_2}(\beta_0-F_{u_{01}}\alpha_{00})=\\ (F_{u_{00}}-D_2F_{u_{01}})\alpha_{00}+F_{v_0}\beta_0+F_{u_{10}}\alpha_{10}+F_{v_1}\beta_1\in\Omega(D_1)_0 \end{array}\end{equation}
therefore
\[\gamma=\beta_0-F_{u_{01}}\alpha_{00}\in\text{Ker}_{D_2}\Omega(D_1)_0.\]
Then trivially
\[\mathcal L_{D_1}^r\gamma\in\Omega(D_1)_0,\ \mathcal L_{D_2}\mathcal L_{D_1}^r\gamma=\mathcal L_{D_1}^r\mathcal L_{D_2}\gamma\in\mathcal L_{D_1}^r\Omega(D_1)_0\subset\Omega(D_1)_0\]
and it follows that the forms
\begin{equation}\label{t6.3}\gamma_r=\mathcal L_{D_1}^r\gamma\qquad (r=0,1,\ldots;\,\gamma=\beta_0-F_{u_{01}}\alpha_{00})
\end{equation}
provide a~basis of module $\text{Ker}_{D_2}\Omega(D_1)_0.$ In order to determine the subsequent term $\text{Ker}_{D_2}^2\Omega(D_1)_0$ of sequence (\ref{t4.5}), we state the formulae
\begin{equation}\label{t6.4}\beta_0=\gamma+F_{u_{01}}\alpha_{00},\ \beta_1=\mathcal L_{D_1}\beta_0=\gamma_1+F_{u_{01}}\alpha_{10}+D_1F_{u_{01}}\alpha_{00}\end{equation}
whence the equation
\[\mathcal L_{D_2}\gamma=A\alpha_{00}+B\alpha_{10}+F_{v_0}\gamma+F_{v_1}\gamma_1,\]
with
\[A=F_{u_{00}}+F_{v_0}F_{u_{01}}+F_{v_1}D_1F_{u_{01}}-D_2F_{u_{01}},\ B=F_{u_{01}}+F_{v_1}F_{u_{01}}\]
follows by the substitution of (\ref{t6.4}) into (\ref{t6.2}).

\medskip

{\bf Summary 1.}\emph{ If either $A\neq 0$ or $B\neq 0$ then $\mathcal R^1=0.$ Otherwise
\[\mathcal R^1=\text{Ker}_{D_2}^2\Omega(D_1)_0=\text{Ker}_{D_2}\Omega(D_1)_0\]
is nontrivial module with the basis $(\ref{t6.3}).$}

%%%%%%%%%%%
\subsection{The existence problem} We are interested just in the noncontrollable case when $A=B=0$ from now on. In order to determine such diffieties, let us alternatively use the traditional notation
\[x=x_1,y=x_2,u=u_{00},u_x=u_{10},\ldots,\ v=v_0,v_x=v_1,v_{xx}=v_2,\ldots\]
and then the top--order summands of $A$ are
\begin{equation}\label{t6.5}\begin{array}{ll}
A=&\cdots +F_{v_x}(F_{u_yu_x}u_{xx}+F_{u_yv_x}v_{xx}++F_{u_yu_y}u_{xy})\\
&\quad -F_{u_yu_x}u_{xy}-F_{u_yv_x}(F_{u_x}u_{xx}+F_{v_x}v_{xx}+F_{u_y}u_{xy})-F_{u_yu_y}u_{yy}.
\end{array}\end{equation}
It follows that
\[F_{u_yu_y}=0, F=f(x,y,u,v,u_x,v_x)u_y+g(x,y,u,v,u_x,v_x)\]
and (\ref{t6.4}) vanishes if moreover
\[f_{u_x}g_{v_x}=f_{v_x}g_{u_x},\ f_{u_x}+ff_{v_x}=0.\]
Such requirements are satisfied if
\begin{equation}\label{t6.6} g=\bar g(x,y,u,v,f),\ H(x,y,u,v,f)+u_xf=v_x \end{equation}
where $\bar g, H$ may be arbitrary functions. Assuming (\ref{t6.6}), identity $B=0$ also is satisfied (direct verification). Finally, the lower--order terms in $A$ provide the concluding requirements
\begin{equation}\label{t6.7} f_{v_x}(\bar g_u+\bar g_vf)=0,\ \bar g_f(f_u+f_vf)=f_y+f_v\bar g+f_{v_x}(\bar g_x+\bar g_vH) \end{equation}
(direct verification). One can calculate the derivatives $f_{v_x},f_u,f_v,f_y$ by the implicit equation (\ref{t6.6}) and the requirements (\ref{t6.7}) turn into two equations
\begin{equation}\label{t6.8} \bar g_u+\bar g_vf=0,\ \bar g_f(H_u+H_vf)=H_y+H_v\bar g-\bar g_x-\bar g_vH \end{equation}
for two unknown functions $G,H.$
%%%%%%%%%%%
\subsection{A~simple controllable problem} We are not interested in complete discussion of equations (\ref{t6.7}) or (\ref{t6.8}) here. Let us therefore introduce the ``brutal solution'' which appears if
\[f_y=f_u=f_v=0\quad\text{hence}\quad H_y=H_u=H_v=0.\] Then the requirements (\ref{t6.7}) simplify as
\[\bar g_u+\bar g_vf=0,\ \bar g_x+\bar g_vH=0.\]
Assuming moreover $f_x=H_x=0,$ we obtain the solution $\bar g=G(y,Hx+fu-v)$ which is quite sufficient for our modest aim.

\medskip

{\bf Summary 2.}\emph{We have the noncontrollable case
\[F=fu_y+g,\ g=G(y,Hx+fu-v),\ H+u_xf=v_x,\ f=f(u_x,v_x)\]
where $G=G(y,w)$ and $H=H(f)$ may be arbitrary functions.}

\medskip

In order to avoid trivialities, we suppose $G_w\neq 0$ and $H'\neq 0.$
%%%%%%%%%%%
\subsection{Preparatory remarks} Let us recall the form
\[\begin{array}{rl}
\gamma=\beta_0-f\alpha_{00}&=dv-v_xdx-(fu_y+g)dy-f(du-u_xdx-u_ydy)\\ &=dv-fdu-Hdx-Gdy\in\mathcal R^1 \end{array}\]
and the basis
\begin{equation}\label{t6.9} \gamma_r=\mathcal L_{D_1}^r\gamma=dv_r-\mathcal L_{D_1}^r(fdu)-D_1^rHdx-D_1^rGdy\qquad (r=0,1,\ldots)\end{equation}
of module $\mathcal R^1$ where
\begin{equation}\label{t6.10} \mathcal L_{D_1}^r(fdu)=\binom r0fdu_{r0}+\binom r1 D_1fdu_{r-1,0}+\cdots +\binom rr D_1^rfdu_{00}. \end{equation}
The formulae are of the fundamental importance.
%%%%%%%%%%
\subsection{Some complementary remarks} Clearly
\[d\gamma\cong (du+Hdx+G_w(H'x+u)dy)\wedge df\qquad (\text{mod }\gamma)\]
and we recall the classical Adj--module for the Pfaffian equation $\gamma=0.$ This is a~\emph{flat module} with the basis
\[\gamma,\ df,\ du+Hdx+G_w(H'x+u)dy\]
in the space of the variables $x,y,u,v,f.$ Due to the Frobenius theorem, there exists alternative basis $dM, dN, dP$ such that $\gamma=Q(dM-PdN)$ for appropriate factor $Q.$ We may suppose $N=f$ without loss of generality. It is worth mentioning that the congruence
\[d\gamma\cong dQ\wedge dM,\ d\gamma\cong dy\wedge G_w(Hdx+fdu-dv)=-Q_wdy\wedge\gamma\quad (\text{mod }df)\]
imply useful formula
\[\frac{dQ}{Q}\cong -G_wdy\ (\text{mod }df),\ \ln Q=-\int G_wdy+C(f,Hx+fu-v)\] for the factor $Q.$
%%%%%%%%%%%
\subsection{Toward the space $\mathbf N$}
Let us literally follow the proof of Lemma~\ref{l3.1}. The differentials
\[dx=dx_1, dy=dx_2, du_{rs}\qquad (r,s=0,1,\ldots)\]
clearly provide a~basis of module $\Phi(\mathbf M)/\mathcal R^1.$ We introduce the ``dual'' basis
\[X,Y,U_{rs}\in\mathcal R^1\qquad (r,s=0,1,\ldots)\]
defined by
\[\begin{array}{l}
Xx=Yy=1,\ Xy=Yx=Xu_{rs}=Yu_{rs}=0,\\
U_{rs}u_{rs}=1,\ U_{rs}x=U_{rs}y=U_{rs}u_{r's'}=0\qquad (r\neq r' \text{ or } s\neq s') \end{array}\]
and moreover the more interesting formulae
\[\begin{array}{c}
Xv_r=D_1^rH, Yv_r=D_1^rG, U_{r-k,0}v_r=\binom rk D_1^kf\quad (k=0,\ldots,r)\\
U_{r0}v_{r'}=0\quad (r>r'), U_{rs}v_{r'}=0\quad (s\neq 0) \end{array}\]
follow from (\ref{t6.9}) and (\ref{t6.10}).
%%%%%%%%%%%
\subsection{The common continuation}
The form $\gamma$ is expressible in terms of coordinates $x,y,u,v,u_x,v_x.$ It follows that vector fields
\[U_{r0}\quad (r>1),\ U_{rs}\quad (s>0)\]
do not affect the space $\mathbf N$ in the sense that the module $\mathcal R^1$ is generated by the forms
\begin{equation}\label{t6.11} \mathcal L_X^k\mathcal L_Y^l\mathcal L_{U_{00}}^s\mathcal L_{U_{10}}^r\gamma\qquad (k,l,r,s=0,1,\ldots)\end{equation}
which are expressible in terms of functions
\begin{equation}\label{t6.12} X^kY^lU_{00}^sU_{10}^rh\qquad (h=x,y,u,v,u_x,v_x;\, k,l,r,s=0,1,\ldots)\end{equation}
while the application of other vector fields produces only zero forms and identically vanishing functions. Briefly saying, functions (\ref{t6.12}) should be taken for coordinates on $\mathbf N=\mathbf M^1,$ the forms (\ref{t6.11}) generate the diffiety $\Theta=\Omega^1\subset\Phi(\mathbf N)$ and the (natural projection) of vector fields $X,Y,U_{00},U_{10}$ provide the basis of module $\mathcal H(\Theta)=\mathcal H(\Omega^1).$
%%%%%%%%%%%
\subsection{A~slightly better approach}
In fact the form $\gamma$ is expressible in term of the functions $x,y,u,v,f$ and the module $\mathcal R^1$ is generated by the forms
\begin{equation}\label{t6.13} \mathcal L_U\gamma=dU^rv-U^rfdu-U^rHdx-U^rGdy\qquad (U=U_{00};\,r=0,1,\ldots)\end{equation}
which are expressible in terms of the functions
\begin{equation}\label{t6.14} x,y,u,U^rv,U^rf\qquad (U=U_{00};\,r=0,1,\ldots).\end{equation}
(This follows from the inspection of the top--order terms:
\[Uv=f, U^{r+1}v=U^rf=f_{v_x}U^rv_x=f_{v_x}D_1^rf=\cdots +(f_{v_x})^2v_{r+1}\]
whence
\[\mathcal L_U^r\gamma=(f_{v_x})^2dv_r+\cdots\qquad (r=0,1,\ldots)\]
and we indeed have a~basis of $\mathcal R^1.$)
The functions (\ref{t6.14}) provide coordinates on $\mathbf N=\mathbf M^1,$ the forms (\ref{t6.13}) provide a~basis of diffiety $\Theta=\Omega^1\subset\Phi(\mathbf N)$ and the space $\mathcal H(\Theta)=\mathcal H(\Omega^1)$ is reduced since
\[U_{10}x=U_{10}y=U_{10}u=U_{10}v=U_{10}f=f_{u_x}+f_{v_x}f=0\]
and the vector field $U_{10}$ may be omitted.
%%%%%%%%%%%
\subsection{The reduced differential equation}
The diffiety $\Theta\subset\Phi(\mathbf N)$ is a~prolongation of the Pfaffian equation $\gamma=0$ and therefore corresponds to the system
\[\frac{\partial v}{\partial u}=f, \frac{\partial v}{\partial x}=H(f), \frac{\partial v}{\partial y}=G(y,H(f)x+fu-v)\]
which is equivalent to the system
\[\frac{\partial v}{\partial x}=H\left(\frac{\partial v}{\partial u}\right), \frac{\partial v}{\partial y}=G\left(y,H\left(\frac{\partial v}{\partial u}\right)+\frac{\partial v}{\partial u}u-v\right)\qquad (v=v(x,y,u)).\]
(We may also recall the above result: in fact we have a~Pfaffian equation \mbox{$dM-Pdf=0$} in certain ``the most economical space $\mathbf N$'' with the explicit solution $M=M(f), P=M'(f).$)

%%%%%%%%%%%%%%%%%%%%%%%%%%%%%%%%%%%%%%%%%%%%%%%%%%%%%%
\section*{Appendix}
Let us first informally mention the well--known concept of the infinitesimal symmetry of a~''geometrical object $\mathcal A$'' on a~space $\mathbf M.$ Such infinitesimal symmetry $Z\in\mathcal T(\mathbf M)$ is defined by the property that the Lie derivative $\mathcal L_Z$ ``does not change~$\mathcal A$''. As a~result, there appear a~Lie algebra over $\mathbb R$ of such vector fields~$Z.$

A~slight change of this idea provides the Adj--module \cite{T4}. Let us suppose that even all Lie derivatives $\mathcal L_{fZ}$ $(f\in\mathcal F(\mathbf M))$ do not change $\mathcal A.$ Then the geometrical intuition suggests the idea that the object is ``represented by the orbits of $Z$''. Alternatively saying, $\mathcal A$ can be ``expressed in terms of functions $f\in\mathcal F(\mathbf M)$'' constant along the orbits. In other words, if $\text{Adj}\,\mathcal A\subset\Phi(\mathbf M)$ is the submodule generated by differentials $df$ then $\mathcal H(\text{Adj}\,\mathcal A)\subset\mathcal T(\mathbf M)$ is generated by vector fields $Z.$

\medskip

Examples. If $\mathcal A\subset\Phi(\mathbf M)$ is a~\emph{subset} of differential forms, vector fields $Z$ satisfy $\mathcal L_{fZ}\varphi=0$ $(\varphi\in\mathcal A).$ If $\mathcal A\subset\Phi(\mathbf M)$ is a~\emph{submodule}, we require $\mathcal L_{fZ}\mathcal A\subset\mathcal A.$ Instead of differential forms, we may take tensors as well. For the exterior systems, the Adj--module describes just the classical Cauchy characteristics.

\medskip

The Adj--modules frequently appear already in early E. Cartan's articles, see especially \cite{T7,T9} and we also refer to recent article \cite{T10} for quite other approach and useful review of classical literature. All these authors however deal with finite--dimensional spaces $\mathbf M.$ In our infinite--dimensional space $\mathbf M,$ certain causion is necessary since the vector fields $Z$ need not generate any group and therefore ``do not produce'' any orbits. In order to obtain ``economical variables for $\mathcal A$'', it is necessary to introduce the \emph{Cauchy submodule $\mathcal C$} of module Adj. On this occasion, we refer to the following result \cite[VII. 6]{T4}.
\\\\
{\bf Proposition.} \emph{Let $\Omega\subset\Phi(\mathbf M)$ be a~diffiety with a~good filtration $\Omega_*.$ Let $\mathcal C(\Omega)\subset\mathcal H(\Omega)$ be the submodule of all vector fields $Z$ such that $\mathcal L_Z^k\Omega_l\subset\Omega_{l+c(Z)}$ for all (equivalently: for some) $l$ large enough. Then there exists a~basis of $\Omega$ expressible in terms of functions $f\in\mathcal F(\mathbf M)$ such that $Zf=0$ $(Z\in\mathcal C(\Omega)).$}

\bigskip

Alternatively saying, the orbits of vector fields $Z\in\mathcal C(\Omega)$ exist and may be regarded for the ``absolut Cauchy characteristics'' of the diffiety $\Omega.$ The Proposition remains true for the pre--diffieties \cite[VIII. 3]{T4}. In this way, the uniquely determined underlying space $\mathbf N$ of flat submodules $\mathcal R\subset\Omega$ without any ``parasite variables'' appears.

\end{document}